\def\sign{\operatorname{sgn}}
\newcommand{\pd}{\partial}
\def\dist{\operatorname{dist}}
\newcommand{\eps}{\varepsilon}
\newcommand{\la}{\lambda}
\newcommand{\vf}{\varphi}
\newcommand{\bR}{\mathbb{R}}
\newcommand{\bE}{\mathbb{E}}
\newcommand{\bP}{\mathbb{P}}
\newcommand{\cD}{\mathcal{D}}
\newcommand{\HH}{\{-1,1\}^n}
\newtheorem{theorem}{Theorem}[section]
\newtheorem{remark}[theorem]{Remark}
\numberwithin{equation}{section}
\begin{document}
\title[Poincar\'e inequality on Hamming cube]{Improving constant in end-point Poincar\'e inequality on Hamming cube}
\author[Paata Ivanisvili, Dong Li, Ramon van Handel, Alexander Volberg]{Paata Ivanisvili, Dong Li, Ramon van Handel, Alexander Volberg}
\thanks{ Volberg is partially supported by the NSF DMS-1600065.  }
\address{Department of Mathematics, Princeton University}
\email{rvan@princeton.edu \textrm{(Ramon Van Handel)}}
\address{Department of Mathematics, Princeton University}
\email{paata.ivanisvili@princeton.edu \textrm{(Paata ivanisvili)}}
\address{Department of Mathematics, Hong Kong University of Science and Technology}
\email{madli@ust.hk \textrm{(Dong Li)}}


\address{Department of Mathematics, Michigan State University, East Lansing, MI 48823, USA}
\email{volberg@math.msu.edu \textrm{(Alexander\ Volberg)}}
\makeatletter
\@namedef{subjclassname@2010}{
  \textup{2010} Mathematics Subject Classification}
\makeatother
\subjclass[2010]{42B20, 42B35, 47A30}
\keywords{}
\begin{abstract} 
We improve the constant $\frac{\pi}{2}$ in $L^1$-Poincar\'e inequality on Hamming cube. For Gaussian space the sharp constant in $L^1$ inequality is known, and it is  $\sqrt{\frac{\pi}{2}}$. For Hamming cube the sharp constant is not known, and $\sqrt{\frac{\pi}{2}}$ gives an estimate from below for this sharp constant. On the other  hand, L. Ben Efraim and F. Lust-Piquard  have shown an estimate from above: $C_1\le \frac{\pi}{2}$. There are at least two other independent proofs of the same estimate from above  (we write   down  them in this note). Since those proofs are very different from the proof of  Ben Efraim and Lust-Piquard  but gave the same constant, that might have indicated that constant is sharp. But here we give a better estimate from above, showing that $C_1$ is strictly smaller than  $\frac{\pi}{2}$.
It is still not clear whether $C_1> \sqrt{\frac{\pi}{2}}$. We discuss this circle of questions.
\end{abstract}
\maketitle 

\section{Introduction}
\label{intro}

In \cite{BELP} the following inequality is proved
\begin{equation}
\label{L1P1}
\| f-\bE f\|_1 \le \frac{\pi}{2} \|\nabla f\|_1,
\end{equation}
where $f$ is a function on Hamming cube $\HH$. Pisier proved that in Gaussian space the following Poincar\'e inequality holds with the sharp constant $\sqrt{\pi/2}$:
\begin{equation}
\label{G1P1}
\| f-\bE f\|_1 \le \sqrt{\frac{\pi}{2} }\|\nabla f\|_1,
\end{equation}
If we denote by $C_1$ the best constant in the Poincar\'e inequality in $L^1(\HH)$, then we see that
$$
\sqrt{\frac{\pi}{2} } \le C_1 \le  \frac{\pi}{2}\,.
$$

In this note we improve the right estimate.  Let $\Delta$ be Laplacian on $\HH$ (negative operator).  Let $P_t:= e^{t\Delta}$ be a corresponding semi-group.

The proof of \eqref{L1P1} in \cite{BELP} is striking. To obtain this estimate the authors adapt Pisier's proof to the Hamming cube.  For this  they  lift 
the problem about functions to non-commutative problem about matrices.  
After that they manage to represent operator $P_t$ as a compression of a semi-group acting on a non-commutative space of $2^n \times 2^n$ matrices, and then the rest of the argument relies on the non-commutative Khinthchin inequality.  
This lifting of a problem about usual functions to a non-commutative setting is immensely beautiful and enticing, but also a bit mysterious.

There are ``commutative" proofs of the estimate from above in $L^1(\HH)$-Poincar\'e inequality. 
We present them in Section \ref{BellmanC}. They give exactly the same constant $\pi/2$ (or worse) as in 
\cite{BELP} but they use a sort of ``Bellman function" monotonicity idea. We learnt them from the book of Bakry--Gentil--Ledoux \cite{BGL}, Chapter 8.

This persistence of $\pi/2$ constant in three very different proofs could have been suggestive. But here we prove that sharp constant is smaller than $\pi/2$. What it is remains enigmatic.

\section{Dual problem}
\label{dual}

We write a dual problem as follows. Let $f \in L^1(\HH)$ and $g\in L^\infty (\HH)$. Let $\bE f=0$. Then
$$
f-\bE f = -\int_0^\infty \frac{d}{dt}P_t f \, dt\,.
$$
and hence
\begin{align*}
&(f - \bE f, g) =  -\int_0^\infty (\Delta P_t f , g)\, dt = -\int_0^\infty (\Delta  f , P_t  g)\, dt =
\\
&  - ( \Delta f , \int_0^\infty P_t  g \, dt) =   - ( \nabla f ,  \int_0^\infty  \nabla P_t  g\, dt) \,.
\end{align*}

Therefore,
$$
\bE |f-\bE f| \le \|\nabla f\|_1 \cdot \sup_{\|g\|_\infty \le 1} \Big\| \int_0^\infty  \nabla P_t  g\, dt\Big\|_\infty\le  \|\nabla f\|_1 \cdot \sup_{\|g\|_\infty \le 1} \int_0^\infty  \|\nabla P_t  g\|_\infty\, dt  \,.
$$

So we will  estimate
$$
  \sup_{\|g\|_\infty \le 1} \int_0^\infty  \|\nabla P_t  g\|_\infty\, dt 
 $$
 by $C_{dual}\|g\|_\infty$.   We will prove that $C_{dual}<\frac{\pi}{2}$. We just showed that $C_1\le C_{dual}$. There is a very good possibility that $C_1< C_{dual}$. We discuss that in this note, where we talk about the $Curl$ space--see below in Section \ref{curl}.
 
\section{Integral operator}
\label{intO}

Let us consider $g\mapsto \nabla P_t  g$ as an integral operator and let us write down its kernel.  Consider independent random variables $\{y_1,\dots, y_n\}$, which are
$\rho$-correlated with standard Bernoulli independent random variables $\{x_1,\dots, x_n\}$. If $y_i= x_i$ with probability $\frac{1+\rho}{2}$ and $y_i= - x_i$ 
with probability $\frac{1-\rho}{2}$, then we have them exactly $\rho$-correlated  $\bE y_i x_i = \rho$.  Given a fixed string $x\in \HH$, we can write
$$
\bE_{{\bf y} \sim N_\rho(x)}{\bf y}^S = \rho^{|S|} x^S,
$$
where $S$ is a multi-index of $0$ and $1$, $x^S$ is a corresponding polynomial, and ${\bf y} \sim N_{\rho}(x)$ means distribution $\rho$-correlated  independent random variables.

Putting  $\rho= e^{-t}$ we get
$$
\bE_{{\bf y} \sim N_\rho(x)} g({\bf y}) = (P_t g)(x)\,.
$$
Since 
$$
\nabla P_t  g = e^{-t} P_t \nabla g
$$
we can apply this to $\pd_1 g$.
$$
(P_t \pd_1g)(x) = \bE_{{\bf y} \sim N_\rho(x)} \pd_1 g({\bf y})\,.
$$
Now we want to find $\vf_1({\bf y})$ such that
$$
\bE_{{\bf y} \sim N_\rho(x)} \pd_1 g({\bf y}) =\bE_{{\bf y} \sim N_\rho(x)} \vf_1({\bf y}) g({\bf y})\,.
$$
But $\pd_1$ eliminates all polynomials that do not have ${\bf y}_1$ and cross off ${\bf y}_1$ from other polynomials. So $\vf_1$  such that
$$
\bE_{{\bf y} \sim N_\rho(x)}\vf_1({\bf y}) {\bf y}_k =\delta_{1k}\,.
$$
Clearly the following $\vf_1$ works:
$$
\vf_1(y) = \frac{y_1-  \bE_{{\bf y} \sim N_\rho(x)} {\bf y}_1}{Var[{\bf y_1}]}= \frac{y_1-  \rho x_{1}}{1-\rho^2} =  \frac{y_1-  e^{-t}x_{1}}{1-e^{-2t}}\,.
$$
Combining all that we get the integral representation of $(P_t \nabla g)(x)$:
\begin{equation}
\label{intKernel}
(P_t \nabla g)(x) = \bE_{{\bf y} \sim N_\rho(x)} \Big( (\frac{{\bf y_1}-  e^{-t}x_{1}}{1-e^{-2t}}, \dots, \frac{{\bf y_n}-  e^{-t}x_{n}}{1-e^{-2t}}) g({\bf y})\Big)\,.
\end{equation}
From \eqref{intKernel} we get ($\la=\la_1,\dots, \la_n)$ (a unit vector in $\bR^n$)
\begin{equation}
\label{3norms}
\|(P_t \nabla g)\|_{\infty}\le \frac1{\sqrt{1-e^{-2t}}} \|g\|_\infty \left\| \sup_{\la: \|\la\|_2=1} \bE_{{\bf y} \sim N_\rho(x)}\Big |\sum_{j=1}^n\la_j \frac{ {\bf y}_j-  e^{-t}x_{j}}{\sqrt{1-e^{-2t}}}\Big|\right\|_{\infty}
\end{equation}
To estimate the right hand side without loss of generality we can assume that $x_{j}=1$ for all $j=1,\ldots, n$. Indeed, this follows from the fact that we are taking supremum over all $\lambda \in \mathbb{S}^{n-1}$, and since ${\bf y}-e^{-t}x_{j}$ takes values $x_{j} (1-e^{-t})$ and $-x_{j}(1+e^{-t})$ we can absorb the signs $x_{j}$ into the values of $\lambda_{j}$. 

The random variables $\frac{ {\bf y}_j-  e^{-t}}{\sqrt{1-e^{-2t}}}$ are independent, and they have the same distribution as random variables
$\xi_i^t$ that assume value  $\sqrt\frac{1- e^{-t}}{1+ e^{-t}}$ with probability $\frac{1+ e^{-t}}{2}$ and  $-\sqrt\frac{1+ e^{-t}}{1- e^{-t}}$ with probability $\frac{1- e^{-t}}{2}$. 
 
 Hence, for $\|g\|_\infty\le 1$, we have
 \begin{equation}
\label{est1}
\|\nabla(P_t  g)\|_\infty= \frac{e^{-t}}{\sqrt{1-e^{-2t}}}  \sup_{\la: \|\la\|_2=1}\big \|\sum \la_j \xi_j^t\big \|_{L^1(\bP)}\,.
\end{equation}

\bigskip

Notice that $\int \xi_j^t \, d\bP =0$, $\int (\xi_j^t)^2\, d\bP =1$. Thus, there is a trivial estimate
\begin{equation}
\label{est2}
\|\nabla(P_t  g)\|_\infty= \frac{e^{-t}}{\sqrt{1-e^{-2t}}}  \sup_{\la: \|\la\|_2=1}\big \|\sum \la_j \xi_j^t\big \|_{L^2(\bP)} \le \frac{e^{-t}}{\sqrt{1-e^{-2t}}}\,.
\end{equation}
 The first estimate here is just \eqref{est1}, the second one is just a trivial fact that $\|\sum \la_j \xi_j^t\big \|_{L^2(\bP)}^2 = \la_1^2+\dots +\la_n^2=1$.
 
 Using $\int_0^\infty  \frac{e^{-t}}{\sqrt{1-e^{-2t}}} dt =\int_0^1\frac{du}{\sqrt{1-u^2}}=\frac{\pi}{2}$ we see \eqref{L1P1} one more time.
 
 \bigskip
 
 To improve this estimate it is sufficient to prove the following theorem.
 \begin{theorem}
 \label{34}
 Let $p\in [\frac12,1\big]$.
 Consider independent random variables $\eps_j$, $j=1, \dots, n$, having values $\sqrt{\frac{1-p}{p}}$ with probability $p$ and $-\sqrt{\frac{p}{1-p}}$ with probability $1-p$.  Then $\sup_{\la\in \bR^n: \|\la\|_2=1} \|\sum\la_j \eps_j\|_1 =: q(p) <1$ for $p$ lying  in a small interval around $3/4$.
 \end{theorem}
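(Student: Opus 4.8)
The plan is to prove the stronger statement that for every $p\in(\tfrac12,1)$ (in particular for $p$ in a neighborhood of $\tfrac34$),
$$
q(p)\;=\;\sup_{n}\ \sup_{\la\in\bR^n,\ \|\la\|_2=1}\ \Big\|\sum_{j=1}^n\la_j\eps_j\Big\|_1\;<\;1 .
$$
Throughout write $X=\sum_j\la_j\eps_j$, and $a=\sqrt{\tfrac{1-p}{p}}$, $b=\sqrt{\tfrac{p}{1-p}}$, so $\eps_j=a$ with probability $p$, $\eps_j=-b$ with probability $1-p$, $\bE\eps_j=0$, $\bE\eps_j^2=1$. Cauchy--Schwarz already gives $\|X\|_1\le\|X\|_2=1$; the content is strictness, \emph{uniformly in $n$}. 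The argument splits on the size of $m:=\max_j\la_j^2$. In the first regime, when $m$ is bounded below, relabel so $\la_1^2=m$ and write $X=\la_1\eps_1+R$ with $R=\sum_{j\ge2}\la_j\eps_j$ independent of $\eps_1$, $\bE R=0$, $\bE R^2=1-m$. Conditioning on $\eps_1$ and applying Cauchy--Schwarz to $R$,
$$
\|X\|_1\le\bE_{\eps_1}\sqrt{\la_1^2\eps_1^2+\bE R^2}= p\sqrt{1-m(1-a^2)}+(1-p)\sqrt{1+m(b^2-1)}=:f(m).
$$
Since $p(1-a^2)=(1-p)(b^2-1)=2p-1>0$, the two one-sided slopes cancel, so $f$ is a sum of square roots of (decreasing, resp.\ increasing) affine functions, hence strictly concave on $[0,1]$ with $f(0)=1$ and $f'(0)=0$; therefore $f$ is strictly decreasing on $(0,1]$ and $f(m)<1$ there. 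Consequently, for any fixed threshold $m_0\in(0,1]$, every unit $\la$ with $\max_j\la_j^2\ge m_0$ gives $\|X\|_1\le f(m_0)<1$.

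In the complementary regime, when all $\la_j^2\le m_0$, a quantitative CLT applies. Let $g_1,\dots,g_n$ be i.i.d.\ standard Gaussians, so $Z:=\sum_j\la_jg_j\sim N(0,1)$ and $\bE|Z|=\sqrt{2/\pi}$. Swap the $\eps_j$ for the $g_j$ one index at a time (Lindeberg's method): because $\eps_j$ and $g_j$ share their first two moments, the $j$-th swap changes $\bE\psi_\delta(\,\cdot\,)$, for a $C^3$ mollification $\psi_\delta$ of $|\,\cdot\,|$ with $\|\psi_\delta-|\,\cdot\,|\|_\infty\le\delta$ and $\|\psi_\delta'''\|_\infty\le C\delta^{-2}$, by at most $\tfrac16\|\psi_\delta'''\|_\infty|\la_j|^3(\bE|\eps_j|^3+\bE|g_j|^3)$. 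Summing and using $\sum_j|\la_j|^3\le(\max_j|\la_j|)\sum_j\la_j^2\le\sqrt{m_0}$ and $\bE|\eps_j|^3=pa^3+(1-p)b^3<\infty$,
$$
\big|\bE\psi_\delta(X)-\bE\psi_\delta(Z)\big|\le C_p\,\delta^{-2}\sqrt{m_0},\qquad \|X\|_1\le\sqrt{2/\pi}+2\delta+C_p\,\delta^{-2}\sqrt{m_0},
$$
and optimizing in $\delta$ yields $\|X\|_1\le\sqrt{2/\pi}+C'_p\,m_0^{1/6}$.

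To conclude, fix $p\in(\tfrac12,1)$ and choose $m_0>0$ so small that $\sqrt{2/\pi}+C'_p\,m_0^{1/6}<1$ (possible since $\sqrt{2/\pi}<1$). Every unit $\la$, in every dimension $n$, lies in one of the two regimes, so $\|X\|_1\le\max\!\big(f(m_0),\ \sqrt{2/\pi}+C'_p\,m_0^{1/6}\big)<1$; hence $q(p)<1$. Since $f$, $\bE|\eps_j|^3$, and therefore $C'_p$, depend continuously on $p$, the choice of $m_0$ can be made uniform on a small interval around $\tfrac34$, which is the assertion of the theorem.

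The technical heart, and the step I expect to be the main obstacle, is the second regime: the peeling bound $f(m)$ is worthless as $m\to0$, so a genuine ``spreading'' input for large $n$ is needed, and this cannot be dispensed with, since small-$n$ configurations (e.g.\ two equal weights at $p=\tfrac34$) make $X$ atomic and far from Gaussian yet still have $\|X\|_1<1$ only because the atoms avoid $\pm1$. Carrying out the Lindeberg estimate cleanly is mildly delicate because the corner of $|\,\cdot\,|$ forces a mollification and costs powers of $\delta$; an alternative is to combine the elementary identity $1-\bE|X|=\tfrac12\bE(1-|X|)^2$ with a Kolmogorov--Rogozin anti-concentration bound to show $\bE(1-|X|)^2$ is bounded away from $0$ when $m$ is small, but the dichotomy with the first regime is unavoidable in either approach.
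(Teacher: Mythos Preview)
Your proof is correct and, like the paper's, splits on the size of $m=\max_j\la_j^2$, but the two regimes are handled by different mechanisms in each argument.

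In the large-$m$ regime the paper simply uses the triangle inequality
\[
\bE|X|\le\la_1\,\bE|\eps_1|+\bE|R|\le 2\sqrt{p(1-p)}+\sqrt{1-\la_1^2},
\]
which only beats $1$ when $\la_1^2$ is very close to $1$ (the paper takes the threshold $0.99$). Your conditioning-then-Cauchy--Schwarz bound $f(m)$, together with the concavity observation $f(0)=1$, $f'(0)=0$, is sharper: it gives $\|X\|_1<1$ for \emph{every} $m>0$, so the threshold $m_0$ can be chosen freely to match the second regime.

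In the small-$m$ regime the arguments diverge more substantially. The paper avoids any CLT: it argues by contradiction that if $\bE|X|$ were close to $1$ then $|X|^2$ would be tightly concentrated near $1$, computes $\operatorname{Var}(X^2)\ge 2(1-\sum\la_j^4)\ge 0.02$ under $m\le 0.99$, bounds $\bE(X^2-1)^4$ via the eighth moment, and applies Paley--Zygmund to force $|X^2-1|\ge 0.07$ with definite probability. Your Lindeberg swap with a mollified absolute value is the more classical route; it trades the moment bookkeeping for the mollification parameter $\delta$ and gives the quantitative bound $\|X\|_1\le\sqrt{2/\pi}+C_p'\,m_0^{1/6}$, which is arguably cleaner and makes the dependence on $m_0$ explicit. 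Both approaches cover all $p\in(\tfrac12,1)$ with constants depending continuously on $p$, so the uniformity near $p=\tfrac34$ follows the same way.
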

 
\medskip
 
 \begin{remark}
 In fact, the proof will show that $q(p) <1$ for all $p\in (\frac12, 1)$.
 \end{remark}
 
 \medskip
 
  In the next two section we prove this theorem.

 \medskip
 
 \noindent{\bf Acknowledgement.} We are grateful to Sergei Bobkov who indicated to us the article \cite{BC}. The proof there, even though it is different from the proof below, encouraged us.
 
\section{Maximum $\la$ is separated from $1$}
\label{mala}

Everything is real-valued below. We will need the $8$-th moment in the calculation below.

Let $\xi_i$, $i=1, \dots, n$ be our random Bernoulli variables with $\bE \xi=0, \bE\xi^2=1$, $1-p, p$ probability of values $\sqrt{\frac{p}{1-p}}, -\sqrt{\frac{1-p}{p}}$, and independent. Let $\la=(\la_1,\dots, \la_n)$ be a point on the unit sphere. In this section we consider the case
\begin{equation}
\label{099}
\max |\la_k|^2 \le 0.99\,.
\end{equation}

We want to prove that independently of $n$ under this assumption above and with a certain $\eps>0$, which will depend only on a constant  $p$ chosen later in \eqref{cheps}, we will have
\begin{equation}
\label{09999}
\max |\la_k|^2 \le 0.99\Rightarrow\bE |\sum\la_i \xi_i| \le (1-\eps^2)\big(\bE (\sum\la_i \xi_i)^2\big)^{1/2}=1-\eps^2\,.
\end{equation}

Denote $Y:=  |\sum\la_i \xi_i| $ and notice that if the opposite happens for some $\la$ satisfying \eqref{099}, then

$$
\bE (Y-\bE Y)^2 =1 - \big(\bE Y\big)^2 < 1-(1-\eps^2)^2 \le 2\eps^2\,.
$$
So if \eqref{09999} does not hold, then on a large probability $Y$ is close to $\bE Y$ (and $\bE Y$ is close to  $1$ of course) for certain $\la$ satisfying \eqref{099}.
Hence, for this $\la$,

\begin{equation*}
\text{with probability}\quad  1-2\eps \quad \text{one has} \,\, \big | |\sum\la_i \xi_i| -1\big|^2 \le \eps+2\eps^2\,.
\end{equation*}
We took here into account that if  the opposite to \eqref{09999}  happens, then $0\le 1-\bE Y \le 1-\sqrt{1-2\eps^2}$. 
In particular, we obtain 
\begin{align}\label{cheb}
P\left( \left| |\sum \lambda_{j} \xi_{j}|^{2}-1\right| \leq \sqrt{\varepsilon+2\varepsilon^{2}}(1+\sqrt{\varepsilon+2\varepsilon^{2}})\right)\geq 1-2\varepsilon
\end{align}

\bigskip

Now let us bring \eqref{cheb} (with a certain $\eps$ chosen below with the help of constant $B$ from \eqref{reas} below)  to a contradiction if \eqref{099} holds.

\medskip

Consider $\ell=\sum\la_i\xi_i$, then
$$
\ell^2 = \sum \la_i^2\xi_i^2 + 2\sum_{i<j} \la_i\la_j \xi_i\xi_j\,.
$$
So
\begin{align}
\label{ell4}
&\ell^4 = \sum \la_i^4 \xi_i^4 + 2\sum_{i<j} \la_i^2 \la_j^2 \xi_i^2 \xi_j^2 + 4 \sum_{i<j} \la_i^2\la_j^2 \xi_i^2\xi_j^2 + \notag
\\
&4\sum_{i<j, k<m, (i, j)\neq (k,m)} \la_i\la_j \xi_i\xi_j  \la_k\la_m \xi_k\xi_m\, + 4 \sum_{m,\, i<j}\lambda_{m}^{2}\xi_{m}^{2}\lambda_{i}\lambda_{j}\xi_{i}\xi_{j}
\end{align}
Of course $\bE \ell^2 =1$. Now calculate $Var[\ell^2]$. By \eqref{ell4} we have
\begin{align*}
&Var[\ell^2] = \bE(\ell^2-1)^2 = \bE \ell^4 -2\bE \ell^2 +1 = \bE \ell^4 -1=
\\
& \bE\xi^4 \sum\la_i^4 + 6 \sum_{i<j} \la_i^2 \la_j^2 -1 \geq \sum \lambda_{j}^{4} +3(1-\sum \lambda_{j}^{4}) -1=\\
&2(1-\sum \lambda_{j}^{4})\geq 2\cdot 0.01=0.02 
\end{align*}
Here we have used $\mathbb{E} \xi^{4}\geq 1$ and $\lambda_{j}^{4}\leq 0.99\lambda_{j}^{2}$ for all $j=1,\ldots, n$. 

\medskip

Let $X:= |\ell^2-1|$. If 
\begin{equation}
\label{reas}
\bE X^4 \le B (\bE X^2)^2,
\end{equation}
then the Paley--Zygmund estimate applied to $X^2$ says 
$$
\bP ( X \ge t (\bE X^2)^{1/2}) \ge (1-t^2)^2 \frac1{B}, \quad t\in (0,1)\,.
$$
Let us take \eqref{reas} for granted and let us then  see what Paley--Zygmund estimate gives us with $t=1/2$.
Since we estimated $\bE X^2$ from below as follows
$$
\bE X^2 \ge 0.02
$$
we get
\begin{align}
\label{cheb01}
\bP (  |\ell^2-1| \ge 0.07) \ge \bP ( X \ge \frac12\frac{\sqrt{2}}{10}) \ge \frac9{16} \frac1{B}\,.
\end{align}
This contradicts \eqref{cheb}. Indeed, summing up (\ref{cheb}) and (\ref{cheb01}) we obtain 
\begin{equation}
\label{cheps}
\bP(|\ell^{2}-1|\geq 0.07) + \bP\big(|\ell^{2}-1|\leq  \sqrt{\varepsilon+2\varepsilon^{2}}(1+\sqrt{\varepsilon+2\varepsilon^{2}})\big) \geq \frac{9}{16B}+1-2\varepsilon\,.
\end{equation}
Now it remains to take $\varepsilon$ sufficiently small to get a contradiction.



So if we  prove \eqref{reas} with $B$ depending on $p$ but independent of $n$, we would prove that we have the drop in norm as in \eqref{09999} with $\eps$ sufficiently small depending only on $p$.

\bigskip

To see \eqref{reas} with $B$ independent of $n$ and independent of $\la_i$ satisfying \eqref{099}, $\sum\la_i^2=1$, let us recall that $X= |\ell^2-1|$ and we already estimated
$\bE X^2 = \bE (\ell^2-1)^2 \ge 0.02$ from below, and this estimate depends only on assumption \eqref{099}.

To prove \eqref{reas} we now just need to estimate $\bE X^4 = \bE (\ell^2-1)^4$ from above by $C(p)<\infty$. For this we need only to estimate 
$\bE\, \ell^8$. Looking at \eqref{ell4}, we square it and integrate. It is clear then
that only sums involving $\la_k^{2m}$, $m=1,2, 3, 4$,  will survive. It is now easy to see that $\bE \,\ell^8\le C(p)$, where $C(p)$ is bounded if we do not make $p\to 1$.  This just because $\sum_{j=1}^n\la_j^{2m}\le 1,\, m=1,2,\dots$, and because of the obvious estimate $\bE \, |\xi_i|^{2m} \le K(p, m, \delta)<\infty$ if $p\in [\frac12, 1-\delta\big)$.

Hence, we  have
\begin{equation}
\label{reas1}
\bE X^4 \le B(p) (\bE X^2)^2, \quad B(p) \le B_\delta<\infty, \,\, p\in [1/2, 1-\delta\big)\,.
\end{equation}

So for $p=\frac34$ and also for $p$ in a small fixed (independent of $n$)  neighborhood of $p=\frac34$ we have a definite drop in norm. In other words, we have the H\"older inequality with constant strictly smaller than $1$ independently of $n$ and  independent of $\la$ satisfying \eqref{099}.

\section{Maximum is close to $1$}
\label{close}

What if the $\max_k |\lambda_k|^2=:\la_1^2$ is in $[0.99, 1]$?
Here we should think that $p\in (\frac12 + \delta, 1\big]$. For example $p$ is in a small fixed interval around $\frac34$.

Then we write for $p=\frac34$:
\begin{align*}
&\bE |\sum\la_i \xi_i| \le \la_1 \bE |\xi_1| +\bE |\sum_{i=2}^n\la_i \xi_i|  \le  \la_1 \bE |\xi_1| + \big(\bE |\sum_{i=2}^n\la_i \xi_i|^2\big)^{1/2}
\\
& \le 2\sqrt{(1-p)p} +\sqrt{1-\la_1^2} \le \frac{\sqrt{3}}{2} +\frac1{10} < 0.87+0.1= 0.97.
\end{align*}
Again we have a fixed drop in H\"older inequality independent on $n$. And the same drop happens trivially in a small neighborhood of $p=\frac34$, and this neighborhood does not depend neither on $n$ nor on $\la$ such that $\max_k |\lambda_k|^2\in [0.99, 1]$

\bigskip

\section{Bellman proof of Maurey--Pisier estimate on gaussian space}
\label{BellmanG}

We want to explain two proofs of $L^1$-Poincar\'e inequality on Hamming cube that can be derived from the literature.

We already mentioned that there are  other proofs of the estimate $\bE |f -\bE f|$ via $ C\,\bE |\nabla f|$ on Hamming cube. These can be called ``Bellman function proofs", they also gave $C=\frac{\pi}{2}$. Let us briefly recall one of them. First we recall how to use ``Bellman function approach" to derive the sharp constant  $\sqrt{\frac{\pi}{2}}$ in gaussian space.

The proof from \cite{BGL} below is longer than a very short proof of Maurey--Pisier, 
but it has the advantage that it can be somewhat generalized to Hamming cube $L^1$-Poincar\;e inequality. 

Let $ \Phi(x)$ be the gaussian error function,
$$
\Phi(x) =\frac1{\sqrt{2\pi}} \int_{-\infty}^x e^{-y^2/2}\, dy\,.
$$
Let us consider the ``gaussian isoperimetric profile":
$$
I= \Phi'\circ \Phi^{-1}: [0,1]\to \Big[0, \frac1{\sqrt{2\pi}}\Big]\,.
$$

Let us first prove Maurey--Pisier estimate by ``Bellman function" approach borrowed from \cite{BGL}, Chapter 8.
Another, and more elegant proof, can be found in \cite{Pi}. But it is very ``gaussian" and difficult to invent a simple way to adapt it to the Hamming cube situation.
In a certain sense paper \cite{BELP} does such an adaptation but in a very fascinating non-obvious way.

Let $P_t= e^{t\Delta}$ denote the Ornstein--Uhlenbeck semigroup on $\bR^n$, $\Delta$ is the Ornstein--Uhlenbeck Laplacian. Function $I^2$ will play the part of ``Bellman function" in the sense that a certain monotonicity involving the semigroup $P_t$ and function $I^2$ will be crucial.  We first consider only $f$ such that $0 \le f\le 1$. Obviously, 
$$
\big[I(P_t f)\big]^2 - \big[P_t(I(f))\big]^2 = -\int_0^t \frac{d}{ds} \big[P_s(I(P_{t-s} f))\big]^2\, ds
$$
Combine this with
\begin{align*}
&-\frac{d}{ds} \big[P_s(I(P_{t-s} f))\big]^2= - 2 P_s(I(P_{t-s} f)) \cdot P_s \bigg( \Delta\big( I(P_{t-s} f)\big)- I'(P_{t-s} f)\cdot \Delta P_{t-s} f\bigg)=
\\
&  - 2 P_s(I(P_{t-s} f)) \cdot P_s\bigg( I''(P_{t-s} f) \cdot |\nabla P_{t-s} f|^2\bigg) = 2 P_s(I(P_{t-s} f))\cdot P_s\bigg(\frac{ |\nabla P_{t-s} f|^2}{I(P_{t-s}f)}\bigg) \ge
\\ 
&2P_s(I(P_{t-s} f))\cdot \frac{\big[ P_s |\nabla P_{t-s} f|\big]^2}{ P_s (I(P_{t-s} f))}=\big[ P_s |\nabla P_{t-s} f|\big]^2 \,.
\end{align*}

The third equality here is because $I''= -\frac1{I}$.  The inequality is just Cauchy-Schwartz inequality: $\int\frac{A^2}{B} d\mu \ge \frac{[\int A\, d\mu]^2}{\int B\, d\mu}$. The second equality is the chain rule in this form: for any smooth $G$ and test function $g$ on $\bR^n$
\begin{equation}
\label{chainR}
\Delta G(g) = G'(g) \Delta g + G''(g) |\nabla g|^2\,.
\end{equation}
We warn the reader that only this last simple equality will fail on the cube. 

Let us combine the estimate 
\begin{equation}
\label{kneeOU}
\big[I(P_t f)\big]^2 - \big[P_t(I(f))\big]^2 \ge 2\int_0^t \big[ P_s |\nabla P_{t-s} f|\big]^2 \, ds,
\end{equation}
 which has been just obtained, with the following well-known (and easy, see, e. g., \cite{BGL}) estimate for the Ornstein-Uhlenbeck semigroup:
\begin{equation}
\label{gradOU}
|\nabla P_s g| \le e^{-s} P_s|\nabla g|\,.
\end{equation}
Then we get
$$
\big[I(P_t f)\big]^2 - \big[P_t(I(f))\big]^2 \ge 2 \int_0^t  e^{2s} |\nabla P_t f|^2\, ds \,.
$$
Thus
$$
0 \le f\le 1\Rightarrow |\nabla P_t f|^2 \le \frac{1}{e^{2t} -1} \big(\big[I(P_t f)\big]^2 - \big[P_t(I(f))\big]^2\big)\le  \frac{\big[I(P_t f)\big]^2}{e^{2t}-1}\,,
$$
and so
$$
0 \le f\le 1\Rightarrow |\nabla P_t f| \le  \frac{I(P_t f)}{\sqrt{e^{2t}-1}} \le \frac{1}{\sqrt{2\pi}} \frac{1}{\sqrt{e^{2t}-1}}
\,,
$$
Hence, 
\begin{equation}
\label{positiveG}
0 \le f\le 1\Rightarrow\int_0^\infty |\nabla e^{t\Delta} f| dt \le  \frac{1}{\sqrt{2\pi}}\int_0^\infty  \frac{1}{\sqrt{e^{2t}-1}}\, dt = \frac{1}{\sqrt{2\pi}} \frac{\pi}{2}=\frac12 \sqrt{\frac{\pi}{2}}\,.
\end{equation}
Finally, this immediately implies
\begin{equation}
\label{boundedG}
\int_0^\infty |\nabla e^{t\Delta} f| dt \le   \sqrt{\frac{\pi}{2}}\|f\|_\infty\,.
\end{equation}

This gives the sharp constant in $L^1$-Poincar\'e inequality on gaussian space:
\begin{equation}
\label{PoiG}
\bE_g |f-\bE f| \le \sqrt{\frac{\pi}{2}} \bE_g |\nabla f|\,.
\end{equation}
This proof can be somewhat generalized to Hamming cube $L^1$-Poincar\;e inequality. 
Since the simple chain rule \eqref{chainR} will not work, the proof should be modified and the constant jumps: 
strangely enough, it becomes $\frac{\pi}{2}$. Here is the reasoning.

It would be nice to have on Hamming cube $C^n$
the variant of our usual relationship \eqref{chainR}, e. g.,  to have it in this form:
$$
I'(g) \Delta g - \Delta [I(g)] \ge c [-I''(g)] |\nabla g|^2
$$
with some constant $c$, $c\le 1$. This is how we wish to replace \eqref{chainR}, which is false on Hamming cube.
On gaussian space this is {\it equality} with $c=1$ as we saw in \eqref{chainR} with $G=I$

\section{Bellman proofs of Ben Efraim--Lust-Piquard estimate on Hamming cube}
\label{BellmanC}

On cube this becomes two point inequality
$$
-x_j \pd_j g \cdot I'(g) + x_j \pd_j (I(g)) \ge c [-I''(g)] |\pd_j g|^2,
$$
where $\pd_j g = (g(x_j=1)- g(x_j=-1))/2$.
Or, denoting $g(x_j=1) =: b, g(x_j=-1)=: a$:
$$
-(b-a) I'(b) + (I(b)-I(a))\ge \frac{c}{2} [-I''(b)] (a-b)^2\ge 0\,.
$$
This is
\begin{equation}
\label{2point1}
I(b)-I(a) - I'(b) (b-a) + \frac{c}{2} I''(b) (a-b)^2 \ge 0 
\end{equation}
that suppose to be valid for all pairs $a, b$ in $[0,1]$. Fix $a$ and tend $b$ to one of the end points $0$ or $1$.
Let, for example, $b\to 0$. Notice that $I''(b) \to -\infty$ as $-\frac{1}b\frac{1}{\sqrt{\log 1/b}}$, and notice that $I'(b)\to +\infty$  as $\sqrt{\log 1/b}$. Hence \eqref{2point1} never can be true for $b$ allowed to tend to end points.

\bigskip

We saw that \eqref{2point1} cannot hold for all pairs $a, b\in [0,1]$.

\medskip

So our first try to circumvent the lack of the chain rule is not successful. 

But we can ask another question:
what is the largest possible constant $k>0$  such that

\begin{align}
\label{2point_k}
I(b)-I(a) - I'(b) (b-a) -k\,(a-b)^2/2 \ge 0\quad \forall a, b\in [0,1]?
\end{align}

The answer of course is obvious, $k=\sqrt{2\pi}$. 
Indeed,
\begin{align}
\label{2point3}
&I(b)-I(a) - I'(b) (b-a) + \frac12 \max_{c\in [0,1]} [I''(c)] (a-b)^2\ge 0,\quad \forall a, b\in [0,1],
\\
& \text{that is}\quad I(b)-I(a) - I'(b) (b-a) -\sqrt{2\pi}   (a-b)^2/2  \ge 0\,. \notag
\end{align}

\bigskip

To check that constant $k$  in \eqref{2point_k} cannot be bigger than $\sqrt{2\pi}$ just make $a$ and $b$ go to $1/2$. 
The previous estimate \eqref{2point3} gives us
\begin{equation}
\label{min}
-x_j \pd_j g \cdot I'(g) + x_j \pd_j (I(g)) \ge \min [-I''(g)] |\pd_j g|^2 = \sqrt{2\pi}|\pd_j g|^2,
\end{equation}
Or,
\begin{equation}
\label{chainC}
I'(g) \Delta g - \Delta [I(g)] \ge \sqrt{2\pi} |\nabla g|^2
\end{equation}

Let now $\Delta$ be the Laplacian on Hamming cube, and $P_t= e^{t\Delta}$ be the corresponding flow on cube.
Then we get the analog of \eqref{kneeOU} (but without squares over $[\cdot]$):
\begin{equation}
\label{kneeC}
\big[I(P_t g)\big] - \big[P_t(I(g))\big] \ge \sqrt{2\pi}\int_0^t \big[ P_s |\nabla P_{t-s} g|^2\big] \, ds\ge  \sqrt{2\pi}\int_0^t \big[ P_s |\nabla P_{t-s} g|\big]^2\ge  \sqrt{2\pi} |\nabla P_{t} g|^2\int_0^t  e^{2s} ds ,
\end{equation}

\begin{equation}
\label{Ramon2}
\frac12 (e^{2t}-1)\sqrt{2\pi} |\nabla e^{t\Delta} g|^2 \le I(e^{t\Delta} g) - e^{t\Delta} I(g) \le I(e^{t\Delta} g) \le \frac1{\sqrt{2\pi}}\
\end{equation}
Or,
$$
\frac12 (e^{2t}-1) \sqrt{2\pi} |\nabla e^{t\Delta} g|^2  \le \frac1{\sqrt{2\pi}}\
$$
Hence, for 
$0\le g\le 1$ we have
\begin{equation}
\label{Ramon}
\ |\nabla e^{t\Delta} g\|_\infty \le \frac1{\sqrt{2\pi}} \frac{\sqrt{2}}{\sqrt{e^{2t}-1}}
\end{equation}
Hence for any bounded positive $g$
$$
\|\nabla e^{t\Delta} g\|_\infty \le \frac1{\sqrt{2\pi}} \frac{\sqrt{2}}{\sqrt{e^{2t}-1}}\|g\|_\infty\,.
$$
Now
$$
\int_0^\infty  \frac{1}{\sqrt{e^{2t}-1}} dt =\frac{\pi}{2}\,.
$$
So for positive $g$
\begin{equation}
\label{Ramon0}
\int_0^\infty \|\nabla e^{t\Delta} g\|_\infty dt \le \frac{\pi}{2} \cdot \frac1{\sqrt{\pi}}\|g\|_\infty\,.
\end{equation}
So for all $g$,
\begin{equation}
\label{Ramon1}
\int_0^\infty \|\nabla e^{t\Delta} g\|_\infty dt \le \sqrt{\pi} \|g\|_\infty\,.
\end{equation}

This is worse than $\frac{\pi}{2}  \|g\|_\infty$. Looking at the above proof, we immediately see that $I= \Phi'\circ \Phi^{-1}$ being optimal for the estimate in gaussian space might be not optimal on cube. In fact, replacing $\Delta( I(P_{t-s} f)\big)- I'(P_{t-s} f)\cdot \Delta P_{t-s} f$  by  $\Delta(B(P_{t-s} f)\big)- B'(P_{t-s} f)\cdot \Delta P_{t-s} f$ we can see that we should find $B: [0,1]\to \bR_+$ such that
\begin{equation}
\label{min}
M_B:=\frac{\max_{[0,1]} B(x)}{\min_{[0,1]} [-B''(x)]}\to \min\,.
\end{equation}
If we call this minimum $M$, we obtain (just by repeating the reasoning of Section \ref{BellmanG}) the following estimate:
$$
0\le f\le 1\Rightarrow |\nabla P_t f| \le \sqrt{2}\sqrt{M} \frac{1}{\sqrt{e^{2t}-1}},
$$
For $B=I$ we have $M_I=\frac{1}{2\pi}$. And this implies \eqref{Ramon0}, and \eqref{Ramon1}.

But the choice $B(x)= x(1-x)$ gives minimum in \eqref{min}, and it is $\frac18$. Then
$$
0\le f\le 1\Rightarrow \int_0^\infty  |\nabla P_t f|  dt \le \sqrt{2}\frac1{2\sqrt{2}} \frac{\pi}{2} =\frac{\pi}4\,.
$$
Hence, 
$$
\int_0^\infty  |\nabla P_t f|  dt \le  \frac{\pi}{2}\|f\|_\infty\,.
$$
And this way we get a commutative proof of Ben Efraim--Lust-Piquard estimate:
$$
\bE |g -\bE g| \le \frac{\pi}{2} \bE |\nabla g|\,.
$$

\section{Discussion}
\label{discu}

\subsection{Functions with only two values have constant $\sqrt{\frac{\pi}{2}}$ in $L^1$-Poincar\'e inequality}
\label{two values}

Incidentally, the question of validity of the Gaussian inequality constant $\sqrt{\frac{\pi}{2}}$ on the 
hypercube is particularly mysterious, for the following reason. In the 
Gaussian case, the extremizer that attains the optimal constant is the 
indicator function of a halfspace of probability $1/2$. In particular, it is 
a fortiori a function of the form $f=1_A$. But if we restrict the hypercube 
$L^1$-Poincar\'e inequality only to indicators $f=1_A$, then the inequality 
does hold with the same constant as in the Gaussian case and this is 
optimal. 

This follows from Bobkov's inequality on the cube. In fact, it is known that
\begin{equation}
\label{I(p)}
\max_{p\in [0,1]} \frac{2p(1-p)}{I(p)}=\sqrt{\frac{\pi}{2}} \,.
\end{equation}

Now notice that any function $f$ having only two values can be made to a function having values $0,1$ by linear transformation
$f\to af +b$.  And this transformation does not change the constant in Poincar\'e inequality.Then we can think that $1$ is assumed with probability $p\in (0,1)$. Hence $\bE f=p$, $\bE|f-\bE f| = 2p(1-p)$.

Let $B(x, y):= \sqrt{I^2(x) +y^2}$. Here $I(x)= \Phi'\circ \Phi^{-1}(x)$, where $\Phi$ is the Gaussian error function. It is called Bobkov's function and Bobkov \cite{Bob} proved that for any $f:\{-1,1\}^n \to [0,1]$ the following inequality holds:
\begin{equation}
\label{Bobk1}
I(\bE f)= B(\bE f, 0) \le \bE \big[B(f, |\nabla f|)\big]\,.
\end{equation}

For functions as above having values $0,1$ only, this becomes
\begin{equation}
\label{Bobk1}
I(\bE f) \le \bE |\nabla f|\,.
\end{equation}

So we have for function as above (that is having only two values)
$$
 \frac{I(p)}{2p(1-p)} \bE|f-\bE f| = 2p(1-p) \frac{I(p)}{2p(1-p)} =I(p) \le  \bE |\nabla f|\,,
$$
or
\begin{equation}
\label{2values}
\bE|f-\bE f| \le \max_{p\in [0,1]} \frac{2p(1-p)}{I(p)} \,\,\bE|\nabla f| \le  \sqrt{\frac{\pi}{2}} \,\,\bE|\nabla f|\,.
\end{equation}

 So if the $L^1-$Poincare 
inequality were to not hold for general functions with the optimal 
constant, that begs the question what extremizers could possibly look 
like: then they cannot look like indicators, as they do in the Gaussian case.

Of course, in the continuous case one can re-derive the $L^1$-Poincar\'e 
inequality from its set version, but this does not work in the discrete 
case as it requires the co-area formula.

\bigskip

\subsection{Lipschitz properties in Gaussian setting}
\label{lip}

In this subsection let $0\le f\le 1$.
In Section \ref{BellmanG} we have seen that the following holds in Gaussian setting for the Ornstein--Uhlenbeck semi-group $P_t$ and Bobkov's function $I= \Phi'\circ\Phi^{-1}$:
\begin{equation}
\label{OUPtf}
  I(P_t f)^2 - (P_tI(f))^2 \ge (e^{2t}-1) |\nabla P_t f|^2. \quad \nabla P_sf = e^{-s} P_s\nabla f\,.
\end{equation}
For our purposes, we ignore the second term on the left. That is, we are 
interested in the following slightly weaker inequality:
\begin{equation}
\label{OU1}
  I(P_t  f)^2\ge (e^{2t}-1) |\nabla P_t f|^2.     
\end{equation}
As is already remarked by Bakry-Ledoux, this inequality has the following 
equivalent formulation:
\begin{equation}
\label{OU2}
  (e^{2t}-1) |\nabla \Phi^{-1}(P_tf)|^2 \le 1.   
\end{equation}

Indeed, this follows immediately from the chain rule. (Note that the 
equivalence between \eqref{OU1} and \eqref{OU2} does not hold on the hypercube where the 
chain rule does not hold; so not clear which is more natural.)

In other words, in the Gaussian case, the estimate we seek has a very 
clean reformulation: the quantity $\Phi^{-1}(P_tf)$ (which is precisely what 
appears, say, in Ehrhard inequality) is Lipschitz with universal constant 
depending only on $t$. This estimate is very useful, e.g. it was used it in 
the characterization of equality cases of Ehrhard inequality \cite{RvHE}.

Now we want to give another formulation of \eqref{OU2} that is even more basic. We
claim that \eqref{OU2} should be viewed as a sort of dual isoperimetric inequality 
for Gaussian measure. 

We have not seen discussion of such inequalities in 
the literature but it surely seems natural. To be precise, we claim \eqref{OU2} is 
equivalent to the following extremal statement:
among all functions $0\le f\le1$, the quantity $|\nabla \Phi^{-1}(P_tf)|$
  is maximized pointwise when $f=1_H$, where $H$ is a half-space.   

Indeed it suffices to note that equality in \eqref{OU2} holds pointwise whenever 
$f=1_H$ is any half-space. This shows both that the inequality is sharp and 
that it has a sort of isoperimetric interpretation.

It is not at all clear what the analogous considerations might be on the 
hypercube.

\section{Paradoxical experiments}
\label{para}

 Let us consider again the $L^1$-estimate on the cube: $ \bE |f-\bE f | \le C \| \nabla f \|_1$.
 Define $g:\, \mathbb R \to \mathbb R$ such that  $g(z)=1$ for $z> 0$, $g(z)=0$ for $z=0$  and
 $g(z)=-1$ for $z<0$. Let $f_n(x_1,\cdots, x_n)= g\Big( \frac { \sum_{j=1}^n x_j} {\sqrt n}\Big)$.  
 
 \medskip
 
 Consider first
 $n\gg 1$ with $n$ being odd.  
 
 Obviously $\bE f_n=0$ and $\bE |f_n|=1$. On the other hand,
 for each $i=1,\cdots, n$, easy to check that $|\partial_i f_n|$ takes  value only $0$ or $1$.
 
 \medskip

 Furthermore, denoting $Z_n= \sum_{j=1}^n x_j$, it is not difficult to check that
 $|\partial_i f_n|=1$ if and only if  either  $Z_n=1$, $x_i=1$, or $Z_n=-1$, $x_i=-1$.
 
 From this we get $|\nabla f_n|=\sqrt
{\frac {n+1}2}$ or $0$, and the number of vertices where $|\nabla f_n|\neq 0$ is precisely $2 \binom{n} {\frac {n+1}2}$. Therefore,
 \begin{align}
 \label{2sqrtpi}
 \| \nabla f_n \|_1 =  \frac 1 {2^n} \cdot \binom{n} {\frac {n+1}2} \cdot
 \sqrt{\frac {n+1}2} \cdot 2  ={\frac 2 {\sqrt{\pi}}} \cdot (1+o_n(1)),
 \end{align}
 as $n$ tend to infinity. 
 
 \medskip
 
 Now consider $n\gg 1$ with $n$ being even.  Then $\bE f_n=0$
 and $\bE |f_n|=1-o_n(1)$.  On the other hand, now we will be jumping from $\pm 1$ values to $0$ values while calculating $\pd_i g$, hence:
 \begin{align*}
 \|\nabla f_n\|_1 = 2^{-n} 
 \cdot 
 \binom{n}{\frac n2} \cdot \sqrt n \cdot \frac 12
 + 2^{-n}
 \cdot \binom{n}{\frac n2+1} 
 \cdot \sqrt{\frac n2 +1} \cdot \frac 12 \cdot 2 = 
 \frac {1+\sqrt 2}{\sqrt {2\pi}} \cdot (1+o_n(1)).
 \end{align*}
 
 \medskip
 
 The above two cases show that for $2$-valued $g$ one cannot saturate the optimal constant for
 the discrete Hamming cube case.  For the odd $n$ case we get the constant
 $$
 C_{odd, charact. function} =\frac{\sqrt{\pi}}{2} <\sqrt{\frac{\pi}{2}},
 $$
 and for the even $n$ case we get
 $$
 C_{even, charact. function} =\sqrt{\pi}\frac{\sqrt{2}}{\sqrt{2}+1}<\sqrt{\frac{\pi}{2}}.
 $$

 Amusingly, if we take $g$ to be a smooth function, then it
 is not difficult to check that for $f_n= g( (\sum_{j=1}^n x_j)/\sqrt n)$, one has
 \begin{align*}
 |\partial_j f_n| = \frac 1 {\sqrt n} \cdot \Big( \big|g^{\prime}\Big( \frac {\sum_{j=1}^n x_j} {\sqrt n} \Big)\big| 
 +O(n^{-\frac 12})\Big).
 \end{align*}
 From this one gets
 \begin{align}
 \label{smooth}
 \frac {\bE |f_n- \bE f_n |} {\bE |\nabla f_n|} \to \frac{ \bE_{\gamma} |g (z)-\bE_\gamma g(z) |} {\bE_{\gamma} |g^\prime|},\quad \text{$n \to \infty$},
 \end{align}
 where $E_{\gamma}$ denotes expectation with respect to standard Gaussian measure on $\mathbb R$.

 \medskip
 
 In particular, choosing function $g$ to be a smooth approximation to ${\bf 1}_{\bR_+}$ and then choosing $f_n$, $f_n(x_1,\cdots, x_n)= g\Big( \frac { \sum_{j=1}^n x_j} {\sqrt n}\Big)$, we conclude that the right hand side of \eqref{smooth} is as close to $\sqrt{\frac{\pi}{2}}$ as we wish. Then making $n\to \infty$ we achieve that the left hand side is also as close to $\sqrt{\frac{\pi}{2}}$ as we wish. 
 
 But these $f_n$ will have values $-1, 1$ and many values in between. it is possible to prove that we cannot achieve 
 the constant $\sqrt{\frac{\pi}{2}}$ by testing symmetric functions having values $-1,1$ or $-1, 0, 1$ alone. 
 The constants in $L^1$-Poincar\'e inequality for such functions are uniformly in $n$ strictly smaller than $\sqrt{\frac{\pi}{2}}$.
 
 \bigskip
 
 \section{Symmetric functions}
 \label{symm}
 
 Let us consider functions having only two values, but symmetric. It is convenient to think now that functions have only values $0,1$, and let us 
 consider balanced functions:
 $$
 \bE f=\frac12\,.
 $$
 Let us now think that $x_i$ are independent standard $0,1$ Bernoulli  random variables.
 Function $f$ has the same value on $R_k:=\{x: x_1+\dots x_n =k\}$. In the previous section we considered the case, when $f$ had one value on 
 all $R_k$ with $k<\frac{n}2$ and another value on all $R_k$, $k> \frac{n}2$ (for $n$ odd, say).
 
 Now let us consider more general symmetric function. As always, being balanced, it will have $\bE |f-\bE f| =\frac12$, so we need to minimize $\bE|\nabla f|$, or, to minimize
 $\bE |\nabla {\bf 1}_A|$, with $|A|=\frac12$. Clearly, it is better for us not to allow ${\bf 1}_A$ to oscillate in too many places. One place of oscillation was considered in the previous section. 
 
 Let us show now that by choosing two places of oscillation we can only make $|A|/\bE |\nabla {\bf 1}_A|$ smaller by making $\bE |\nabla {\bf 1}_A|$ bigger.
 
 So choose $a=\Phi^{-1}(\frac34)$ and put $k =\big[\frac{n}2 + a\frac12 \sqrt{n}\big]$. Let $A$ be the set where
 $x_1+\dots x_n \in [n-k, k]$.
 
 Then 
 $$
 \bE |\nabla {\bf 1}_A| \approx 2 \Big[ \frac12 \sqrt{n-k}\binom{n}{k} \frac1{2^n} +  \frac12 \sqrt{k+1}\binom{n}{k+1} \frac1{2^n} \Big]\,.
 $$
 
 By de Moivre--Laplace formula 
 $$
 \frac1{2^n}\binom{n}{k} \approx \frac{2}{\sqrt{n}} \phi (a) = \frac{2}{\sqrt{n}}  I(\frac34)\,.
 $$
 
  Hence, since $\sqrt{k}\approx \frac{\sqrt{n}}{\sqrt{2}}$ and $\sqrt{n-k}\approx \frac{\sqrt{n}}{\sqrt{2}}$, we get the following:
  $$
 \bE |\nabla {\bf 1}_A| \approx 2\sqrt{2}\, I\big(\frac34\big)\,.
 $$
 Hence, for $f={\bf 1}_A$, with $A$ described above, we have
 $$
 \bE|f-\bE f| \le \frac{1}{4\sqrt{2}\, I\big(\frac34\big)} \bE|\nabla f|\,.
 $$
Constant $C_{odd, charact. function}=\frac{\sqrt{\pi}}{2}$ from the previous section is nothing else than
$\frac{1}{2\sqrt{2}\, I(\frac12)}$. Clearly
$$
\frac{1}{4\sqrt{2}\, I\big(\frac34\big)} = \frac{1}{4\sqrt{2}\, I\big(\frac14\big)}< \frac{1}{2\sqrt{2}\, I(\frac12)},
$$
because $2I\big(\frac14\big)> 2 I(\frac12)$ by concavity of function $I$. 

The conclusion: the symmetric function in this section gives a smaller constant in $L^1$-Poincar\'e inequality than a simpler characteristic function in the 
previous section. It is very believable that among balanced symmetric  functions with only two values  it is the optimal one, thus, 
$$
\max\frac{\bE|f-\bE f|}{\bE|\nabla f|} = \frac{\sqrt{\pi}}{2}\,.
$$

\begin{remark}
Consider this maximum over all functions having two values \textup(without the loss of generality, 
just values $0,1$\textup). We saw that it is  at most $\sqrt{\frac{\pi}{2}}$ in this general setting. But is this constant attained?
For balanced symmetric functions, it is now very believable \textup(by the discussion 
in the present section\textup) that maximum above is much smaller, 
namely $\frac{\sqrt{\pi}}{2}$. But even for all functions with two values \textup(with no symmetries whatsoever\textup) that maximum can be smaller than $\sqrt{\frac{\pi}{2}}$. 
In Proposition 3.1 on page 259 of \cite{BobG} functions $f={\bf 1}_{A_n(a)}$ are considered. Here $a\in \bR$, and 
$$
A_n(a)=\Big\{x: \frac{x_1+\dots +x_n -\frac{n}2}{\frac12\sqrt{n}} \le a\Big\},
$$
where $x_i$ are standard independent Bernoulli variables with values $0,1$. If $a=\Phi^{-1}(\alpha), \alpha\in [0,1]$, then
$$
\bE |{\bf 1}_{A_n(a)} -\bE{\bf 1}_{A_n(a)}| =2 \alpha(1-\alpha)\,.
$$
On the other hand, $\bE|\nabla {\bf 1}_{A_n(a)}|$ is calculated on page 259 of \cite{BobG}:
$$
\bE|\nabla {\bf 1}_{A_n(a)}|\approx \sqrt{2}\,\phi(a) =\sqrt{2}\, I(\alpha)\,.
$$
Whence,
\begin{equation}
\label{Ana}
\lim_{n\to\infty}\max_{\alpha\in [0,1]}\frac{\bE |{\bf 1}_{A_n(a)} -\bE{\bf 1}_{A_n(a)}|}{\bE|\nabla {\bf 1}_{A_n(a)}|} =\frac{1}{2\sqrt{2}\, I(\frac12)} =\frac{\sqrt{\pi}}{2}, \quad a=\Phi^{-1}(\alpha), \alpha\in [0,1]\,.
\end{equation}
\end{remark}

\begin{remark}
We saw above function a certain $f$ having three values $1, 0, -1$, for which the ratio $\frac{\bE|f-\bE f|}{\bE|\nabla f|}$ is bigger than $\frac{\sqrt{\pi}}{2}$, namely it is asymptotically $\frac{\sqrt 2}{\sqrt 2+ 1} \sqrt{\pi}$. We also saw that allowing more values we can saturate the constant $\sqrt{\frac{\pi}{2}}$. It is not clear whether we can surpass this constant.
\end{remark}

\section{Kernel representation of operator  $T$}
\label{T}
Operator $f\to \int_0^\infty \nabla e^{t\Delta} \, f \,dt$ can be written also as follows:
$$
Tf =  \int_0^\infty e^{-t} e^{t\Delta} \, \nabla f \,dt=\int_0^\infty \nabla e^{t\Delta} \, f \,dt\,.
$$
Notice that we cannot loose $e^{-t}$ here, if we drop $e^{-t}$, the expression will become undefined for, say, $f=x_1$. We cannot either consider anything like
$$
f\to \nabla\int_0^\infty e^{t\Delta} f\, dt =\nabla\Delta^{-1}\, f,
$$
because this latter expression is undefined on $f={\bf 1}$.

However, we can remedy this drawback just by introducing the orthogonal projection $P_0$ onto functions on Hamming cube that have average zero: $P_0 f:=f-\bE f$. Then  we can write down $T$ in the following form
\begin{equation}
\label{TP0}
T f = \nabla \Delta^{-1} P_0 f\,.
\end{equation}

\bigskip

But for operator $T$ defined above, it is easy to give its matrix (kernel) representation.
For that let us double the cube: $C^{2n}=\{(x', x)\in C^n \times C^n\}$ and consider
$$
\Pi (x', x) := \Pi_t(x', x):=\Pi_{k=1}^n (1+e^{-t} x_k' x_k)\,.
$$

If $d\mu(x')$ is the uniform measure on the first $C^n$ and if $x$ in the second $C^n$ get fixed, we get a new probability measure 
$$
d\bP (x'):= d\bP_x(x'):= \Pi(x', x)\, d\mu(x')\,.
$$
It is very easy to see that it is indeed a probability measure for any $x$.

In these terms it is easy to compute the matrix of operator $T$.
In fact, we have already done this above: let $K(x', x)$ be the kernel representing $T$ in the sense that
$$
Tf(x) = \int_{C^n} K(x', x) f(x') \, d\mu(x')\,.
$$
It is a vector kernel, and let $T_i$ corresponds to $\int_0^\infty \pd_i e^{t\Delta} \, f \,dt$, where $\pd_i$ is the elimination operator for $x_i$. Let $K_i$ be the kernel of $T_i$ in the just mentioned sense.
Then
\begin{equation}
\label{KT}
K_i(x', x) = \int_0^\infty\frac{e^{-t}}{\sqrt{1-e^{-2t}}}\frac{x_i'-e^{-t} x_i}{\sqrt{1-e^{-2t}}} \Pi(x',x) \, dt\,.
\end{equation}

This we already had in the third Section essentially. Now let us rewrite it conveniently.
\begin{equation}
\label{KT1}
K_1(x', x) = \int_0^\infty\frac{e^{-t}}{1-e^{-2t}}(x_1'-e^{-t} x_1)\,\Pi(x', x)\, dt\,.
\end{equation}

This does not look very nice because of $t\approx 0$ seems like creating a problem. But it does not, because this expression can be rewritten as follows:
\begin{equation}
\label{KT1a}
K_1(x', x)  = \int_0^\infty\frac{e^{-t}}{1-e^{-2t}} x_1'\,(1-e^{-t} x_1'x_1)(1+e^{-t} x_1'x_1) \Pi_{k=2}^n (1+e^{-t} x_k' x_k)\, dt\,,
\end{equation}
which is 
\begin{equation}
\label{KT1b}
K_1(x', x)  = \int_0^\infty\frac{e^{-t}}{1-e^{-2t}} x_1'\,(1-e^{-2t})\Pi_{k=2}^n (1+e^{-t} x_k' x_k)\, dt\,,
\end{equation}
which is
\begin{equation}
\label{KT1c}
K_1(x', x)  = \int_0^\infty e^{-t} x_1'\,\Pi_{k=2}^n (1+e^{-t} x_k' x_k)\, dt\,,
\end{equation}

Now the kernel of $T$ is $K=(K_1, K_2, \dots, K_n)$, where $K_i$ is
\begin{equation}
\label{KTi}
K_i(x', x)  = \int_0^\infty e^{-t} x_i'\,\Pi_{k\neq i}^n (1+e^{-t} x_k' x_k)\, dt\,,\quad i=1,\dots, n\,.
\end{equation}

\bigskip

Recall the notation $C^n =\{-1,1\}^n$.
We are interested in the norm of the integral operator with kernel $K=(K_1,\dots, K_n)$ as the operator from $L^\infty (C^n)\to L^\infty(C^n; \ell^2_n)$. 

Given $x, y\in C^n$ we write $z=y\cdot x=(y_1x_1,\dots, y_nx_n)\in C^n$.

\bigskip

Kernel $K_i(x', x)$ can be written down as $K_i(x', x)=x_i \tilde K_i(x', x)$, where
$$
 \tilde K_i(x', x):= \int_0^\infty e^{-t} x_i'x_i\,\Pi_{k\neq i}^n (1+e^{-t} x_k' x_k)\, dt,
 $$
and we see that this is a kernel of the form $k_i(x'\cdot x_i)$, that is, it is a convolution kernel.

We use it with measure $d\mu(x')$ that is invariant, meaning that $d\mu(x'\cdot x)$ is the same measure.
Notice also that  the facts that $K_i(x', x)=x_i \tilde K_i(x', x), x_i=\pm 1,$  imply that the norm of operator with kernel $K$ is the same as the norm of operator with kernel $\tilde K$ -- we mean here the action from $L^\infty (C^n)\to L^\infty(C^n; \ell^2_n)$.

Let us write
\begin{equation}
\label{miz}
m_{i, z}:=2^{-n}K_i(z, {\bf 1})  = 2^{-n}\int_0^\infty e^{-t} z_i\,\Pi_{k\neq i}^n (1+e^{-t} z_k)\, dt=2^{-n}\int_0^1z_i\,\Pi_{k\neq i}^n (1+\rho z_k)\, d\rho\,,\quad i=1,\dots, n\,,
\end{equation}
The reasoning that we have just made implies that the norm of the integral operator $K$ is the same as the norm of 
the matrix $M:= (m_{i,z})_{i=1,\dots, n;\,z\in \{-1,1\}^n}$ as the matrix acting from $\ell^{\infty}_{2^n}$ to $\ell^2_n$. In fact, this is just invariance:
$$
\int_{C^n} \tilde K(x'\cdot x) f(x') d\mu(x')= \int_{C^n} \tilde K(y) f_x(y) d\mu(y), \quad f_x(y):= f(y\cdot x)\,.
$$

Since $f\to f_x$ is an isometry in  $L^\infty(C^n)$, we see that the norm of our operators from 
$L^\infty (C^n)$ to $ L^\infty(C^n; \ell^2_n)$ is the same as the norm of matrix $M$ from $\ell^\infty_{2^n}$ to $\ell^2_n$.
\bigskip

Rewriting again, we get:
$$
K_i(z, {\bf 1})  =\int_0^1\frac{z_i(1-\rho z_i)}{1-\rho^2}\,\Pi_{k=1}^n (1+\rho z_k)\, d\rho= \int_0^1\frac{z_i-\rho }{1-\rho^2}\,\Pi_{k=1}^n (1+\rho z_k)\, d\rho\,,\quad i=1,\dots, n\,,
$$

Let us denote by $d(z)=\dist(z, {\bf 1})$ in Hamming metric. Then for $z\in \{-1, 1\}^n$ we have:
$$
\Pi_{k=1}^n (1+\rho z_k) = (1+\rho)^{n-d(z)} (1-\rho)^{d(z)}\,.
$$

Thus,
\begin{equation}
\label{KTiz}
K_i(z, {\bf 1})  = \int_0^1\frac{z_i-\rho }{1-\rho^2}(1+\rho)^{n-d(z)} (1-\rho)^{d(z)}\, d\rho\,,\quad i=1,\dots, n\,,
\end{equation}

Let $M_\rho:= (m_{i,z}(\rho))_{i=1,\dots, n;\,z\in \{-1,1\}^n}$ be the matrix acting from $\ell^{\infty}_{2^n}$ to $\ell^2_n$, whose $n\times 2^n$ matrix elements are given by the following formula:
\begin{equation}
\label{me}
m_{i,z}(\rho) := 2^{-n}\frac{z_i-\rho }{1-\rho^2}(1+\rho)^{n-d(z)} (1-\rho)^{d(z)}\,, \quad i=1, \dots, n, z\in\{-1,1\}^n\,, \rho\in (0,1)\,.
\end{equation}

\bigskip

Of course, matrix elements $m_{i,z}:=2^{-n}K_i(z,{\bf 1})$ are just $\int_0^1 m_{i, z}(\rho)\, d\rho$, and one may wonder what to do with integration of $1/(1-\rho^2)$? But this is easy: if $d(z)>0$ then we cancel this singularity by $(1-\rho)^{d(z)}$ factor, and if $d(z)=0$, then of course $z={\bf 1}$ and instead of factor $1/(1-\rho^2)$ we have factors $(z_i-\rho)/(1-\rho^2)= (1-\rho)/(1-\rho^2)=1/(1+\rho)$ for all $i$.

To compute the norm of the matrix $M=(m_{i,z})=\int_0^1 M_\rho\, d\rho$ as the matrix acting from $\ell^{\infty}_{2^n}$ to $\ell^2_n$ one can try to do two different things.

\bigskip

\noindent{\bf The first attempt.} Calculate
\begin{equation}
\label{FA}
\max_{\la,\, \|\la\|_{\ell^2_n}\le 1} \bE_z\Big| \sum_{i=1}^n \la_i \int_0^1\frac{z_i-\rho }{1-\rho^2}(1+\rho)^{n-d(z)} (1-\rho)^{d(z)}\, d\rho\Big|
\end{equation}

Or one can try to use a rougher estimate as follows.

\bigskip

\noindent{\bf The second attempt.} Calculate
\begin{equation}
\label{SA}
\int_0^1\max_{\la,\, \|\la\|_{\ell^2_n}\le 1} \bE_z\Big| \sum_{i=1}^n \la_i \frac{z_i-\rho }{1-\rho^2}\Big|\,(1+\rho)^{n-d(z)} (1-\rho)^{d(z)}\,  d\rho
\end{equation}

The second attempt is precisely what we have done in Sections \ref{mala} and \ref{close}, especially see \eqref{3norms}. We were not very careful in estimating the quantity in \eqref{SA}, we just proved that it is strictly smaller than $\pi/2$.

\bigskip

However, the integrals in \eqref{FA}, namely, 
$$
m_{i, z}=\int_0^1\frac{z_i-\rho }{1-\rho^2}(1+\rho)^{n-d(z)} (1-\rho)^{d(z)}\, d\rho
$$
seems to be treatable, they can be written down as certain combinatorial sums.

\bigskip

For example,
$$
m_{i, {\bf 1}} = \int_0^1 (1+\rho)^{n-1}\, d \rho, \quad i=1, \dots, n\,.
$$
For $z=(1,\dots, 1, -1)$ we have
$$
m_{i, z} = \int_0^1 (1+\rho)^{n-2}(1-\rho)\, d \rho, \quad i=1, \dots, n-1\,,
$$
$$
m_{n, z} = - \int_0^1 (1+\rho)^{n-1}\, d \rho\,.
$$

\begin{figure}[!ht]
\hskip-240pt\vbox{\includegraphics[scale=0.85]{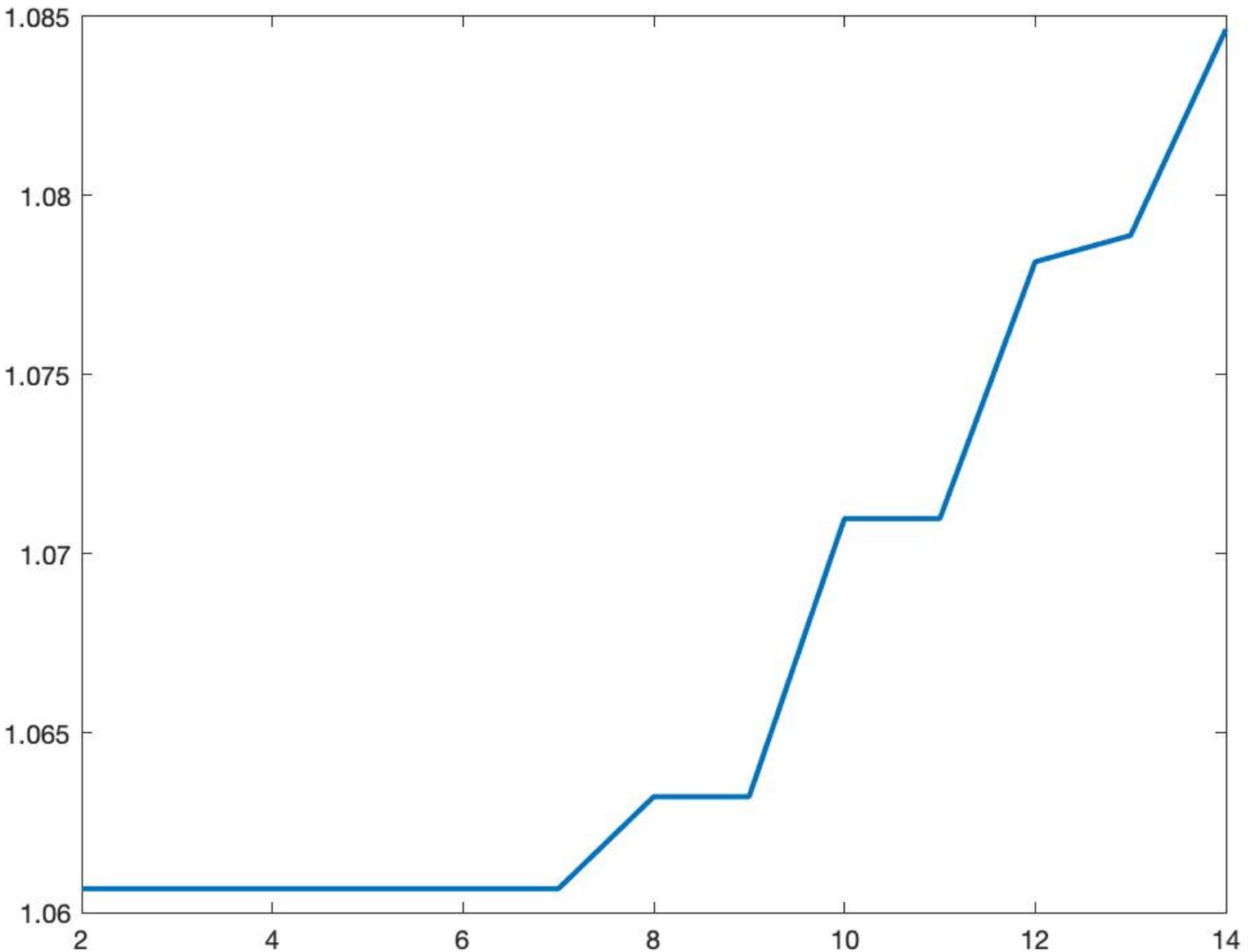}}
\caption{Growth of dual constant with $n$.}
\label{Dual constant}
\end{figure}

\section{Computer experiment for finding $C_{dual}$}
\label{coex}

Let us consider the quantity
 \begin{equation}
\label{Cdual0}
 C_{dual, n}:= \sup_{\|g\|_{L^\infty(\{-1,1\}^n) }\le 1}\|\int_0^\infty  \nabla P_t  g\, dt \|_{\infty}\,,
 \end{equation}
 and let us try to find its numerical values for small dimensions $n$.
 
 \medskip
 
For $n=2$  the optimizer is
$$
g(x_1, x_2) = \min(x_1,x_2) = (x_1 x_2 + x_1 + x_2 - 1)/2.
$$
In dimensions $n=3,\dots,7$, the optimal value is the same so one can just 
take two-dimensional function like $g(x_1,\dots,x_n) = \min(x_1,x_2$). 
Of course, there may be many other optimizers.

The first nontrivial dimension where the $2D$ case is not optimal is $n=8$.

We would like to call the attention of the reader that the graph we get is extremely curious. It 
seems up to dimension $7$, two-dimensional functions are optimal. Then 
suddenly in dimension $8$ there is enough structure to do better with a 
truly eight-dimensional function. 

The optimal value increases only very 
slowly. If we assume it grows sort of linearly (no reason it should, just 
to assume it to make some guesses), then one would extrapolate from the plot 
that $C_{dual,n}$ would reach $\sqrt{\pi/2}$ only around dimension $n=60$. This 
means that there is probably little insight to be gained from the specific 
structure of low-dimensional optimizers. Also, it is very different than the usual experience, which is that universality phenomena (like CLT) often 
kick in at surprisingly low dimension. For example, for Bernoulli $\eps_k$ one has
$$
\bE|\eps_1+...+\eps_9|/\sqrt{9}| \approx 0.82,\,\,
\bE|\eps_1+...+\eps_{13}|/\sqrt{13}| \approx0.81,
$$
which is quite close to the Gaussian limit $\sqrt{2/\pi} \approx 0.80$. Here, we are 
very far from the Gaussian case and seem to approach it only very slowly.

 \medskip

See Fig.~\ref{Dual constant} for the growth of the dual  constant with $n$.
One can see that it grows very slowly (and does not grow at all for $n=2, \dots, 7$). Then it slowly starts to pick up.
Since we work with matrices of size $n\times 2^n$, their size grows too fast. We have already noted that the growth on this figure suggests that we come close to $\sqrt{\pi/2}$ only for $n=60$ or $70$. This experiment is beyond the computer reach.

\section{$Curl$ space and $C_{dual}$}
\label{curl}

Let us remind the reader that
\begin{align*}
&(f - \bE f, g) =   - ( \nabla f ,  \int_0^\infty  \nabla P_t  g\, dt) \,.
\end{align*}

Therefore,
\begin{align*}
&\bE |f-\bE f| \le \|\nabla f\|_1 \cdot \sup_{\|g\|_\infty \le 1} \Big\| \int_0^\infty  \nabla P_t  g\, dt\Big\|_{L^\infty/Curl} \le
\end{align*}

So we need to estimate
\begin{equation}
\label{C1}
C_1:=  \sup_{\|g\|_\infty \le 1}\inf_{h\in Curl}\|h+ \int_0^\infty  \nabla P_t  g\, dt \|_{\infty}\,,
 \end{equation}
 where the ``$Curl$" space is the following:
 $$
 Curl:=\{ h=(h_1,\dots, h_n): \bE( h\cdot \nabla \varphi)= 0\,, \forall \varphi\}\,.
 $$
 
 Since $\nabla=(\pd_1, \dots, \pd_n)$, and $\pd_k$ is the elimination of $x_k$ operator:
 $$
 \pd_k \varphi = \frac12(\varphi^{x_k\to 1} - \varphi^{x_k\to -1})\,.
 $$
 
 \medskip
 
 Space $Curl$ consists of vector functions $h=(h_1,\dots, h_n)$, such that
 \begin{equation}
 \label{stars}
 \pd^*_1 h_1 +\dots +\pd^*_n h_n=0\,.
 \end{equation}
 
 Here
 $$
 \pd^*_k \varphi = x_k \frac12(\varphi^{x_k\to 1} + \varphi^{x_k\to -1})\,.
 $$
 In other words, it is a creation operator, that is
 $$
 \pd^*_k x^S =\begin{cases}  x_k x^S, \,\, \text{if}\,\, k\notin S, 
 \\
 0,\,\,\,\,\, \,\,\,\,k\in S\,.
 \end{cases}
 $$
 
  Space $Curl$ is very large, as the vector functions $h$ such that $\pd^*_k h_k=0, k=1, \dots, n,$ are in this space. And for that to hold, it is enough for each  $h_k$ to be of the following form:
  \begin{equation}
  \label{hk}
  h_k = x_k\pd_k H_k
  \end{equation}
  with arbitrary $H_k$, $k=1, \dots, n$. In fact, space $Curl$  is much larger than that.
  
  \medskip
  
  To get the value of $C_1$ is the same as to calculate the quantity in \eqref{C1}.
  In the previous sections we gave some estimates on a potentially bigger quantity $C_{dual}$, which is given by the following formula:
  \begin{equation}
\label{Cdual}
 C_{dual}:= \sup_{\|g\|_\infty \le 1}\|\int_0^\infty  \nabla P_t  g\, dt \|_{\infty}\,,
 \end{equation}
 
 \bigskip
 
 \subsection{$Curl$ space in gaussian setting does not matter}
 \label{curlgauss}
 
Consider functions in $L^1(\bR^1, \gamma_1)$ orthogonal to all  $\cD:=\{ f': f\in C_0^\infty(\bR^1)\}$.
Then
$$
\int h f' d\gamma_1=0,\quad\forall f' \in \cD\,.
$$
Therefore,
$$
-\int h' f d\gamma + \int xh f d\gamma_1=0,\quad \forall f\in C_0^\infty\,.
$$
Hence
$$
h'=xh,\quad h=C\cdot e^{\frac{x^2}2}\,.
$$
This does not belong to $L^1(\gamma)$ unless $C=0$. So only zero function is in $Curl$. So in gaussian setting the $Curl$ space is zero.

But in dimension $2$ and higher curl space unfortunately exists in gaussian setting. In fact, in $2D$ this space consists of vector functions $h=(h_1, h_2 )\in  L^1(\bR^2, \gamma_2)$ such that
\begin{equation}
\label{cg2D}
(h_1)_{x_1} + (h_2)_{x_2} = x_1 h_1 + x_2 h_2\,,
\end{equation}
which has a solution $h_1=-x_2, h_2=x_1$.

\medskip

In gaussian space, however, we know that 
\begin{equation}
\label{C1eqCdual}C_1= C_{dual}\,,
\end{equation}
 where these constants can be seen in \eqref{C1} and \eqref{Cdual} correspondingly -- where $P_t$ should be understood as Ornstein--Uhlenbeck semi-group. 

For $n=1$ this follows from the above mentioned fact that $Curl=0$ for $1D$ gaussian case.  
Now let $n>1$. Let $G$ be the function of one variable that almost give the supremum in

$$
\sup_{\|g\|_\infty \le 1}\|\int_0^\infty  \nabla P_t  g\, dt \|_{\infty}
 $$
 for $n=1$.  Whence,
 $$
   \|\int_0^\infty  \nabla P_t^{(1)}  G\, dt \|_{\infty}= \|\int_0^\infty  e^{-t}P_t^{(1)}   \nabla G\, dt \|_{\infty}\approx \sqrt{\frac{\pi}{2}}\,.
  $$
 Here $P_t^{(1)}$ is  Ornstein--Uhlenbeck semi-group in $L^1(\bR^1, \gamma_1)$. Since function $G=G(x_1)$, we can understand the last inequality also with $P_t^{(n)}$ is  Ornstein--Uhlenbeck semi-group in $L^1(\bR^n, \gamma_n)$:
$$
   \|\int_0^\infty  \nabla P_t^{(n)}  G\, dt \|_{\infty}= \|\int_0^\infty  e^{-t}P_t^{(n)}   \nabla G\, dt \|_{\infty}\approx \sqrt{\frac{\pi}{2}}\,.
  $$
  This is because $(P_t^{(n)} g)(x) = (P_t^{(1)} g)(x_1)$ for $g$ depending only on $x_1$.

  \section{Combinatorial formulation of $C_{dual}$}
\label{combi}

Constant $C_{dual}$, given by $ C_{dual}:= \sup_{\|g\|_\infty \le 1}\|\int_0^\infty  \nabla P_t  g\, dt \|_{\infty}$ is just 
$$
C_{dual} = \|T\|_{L^\infty\to L^{\infty}}\,,
$$
where $T$ is, e.g. from \eqref{TP0}, that is, $T=\nabla \Delta^{-1}P_0$.

The norm $ \|T\|_{L^\infty\to L^{\infty}}$ is the smallest constant $C$ in the following inequality:
$$
\|\nabla \Delta^{-1} P_0 f\|_\infty  \le C\|f\|_\infty,
$$
which can be written down as follows
$$
\|\nabla \Delta^{-1}  f_0\|_\infty  \le C\inf_a\|f_0 + a{\bf 1}\|_\infty\,,
$$
where $f_0$ runs over all functions on Hamming cube that have zero average. Such functions can be written down as $f_0 = \Delta F$, so we plug this representation into the above formula. Notice that $\Delta^{-1}\Delta F = F- \bE F$. So we are looking at the best constant  $C$ in

$$
\|\nabla F\|_\infty =\|\nabla (F-\bE F)\|_\infty  \le C\inf_a\|\Delta F+ a{\bf 1}\|_\infty\,.
$$

Denote Hamming graph as $(V, E)$, where vertices are denoted by $i=1, \dots, 2^n$.

Then the previous best constant squared, namely, $C^2$ (that, is $C_{dual}^2$) is the best constant in the following inequality with arbitrary real numbers $\{a_i\}_{i=1}^{2^n}$:
\begin{equation}
\label{combi-eq}
\sup_{i\in V} \sum_{j: (i, j) \in E} (a_i-a_j)^2 \le C^2 \inf_{a\in \bR} \sup_{i\in V}\Big( a+ \sum_{j: (i, j)\in  E} (a_i- a_j)\Big)^2\,.
\end{equation}

In particular, we have proved that with $\eps>0$ and independent of $n$
\begin{equation}
\label{combi-eq1}
\sup_{i\in V} \sum_{j: (i, j) \in E} (a_i-a_j)^2 \le \Big(\frac{\pi}{2} -\eps\Big)^2 \sup_{i\in V}\Big( \sum_{j: (i, j)\in  E} (a_i- a_j)\Big)^2\,.
\end{equation}

\section{Calculations with matrix $M$. An example when $Curl$ space is essential}
\label{M}

Recall that we introduced in  \eqref{me} the following  $n\times 2^n$ matrix $M_\rho:= (m_{i,z}(\rho))_{i=1,\dots, n;\,z\in \{-1,1\}^n}$:
\begin{equation}
\label{me1}
m_{i,z}(\rho) := 2^{-n}\frac{z_i-\rho }{1-\rho^2}(1+\rho)^{n-d(z)} (1-\rho)^{d(z)}\,, \quad i=1, \dots, n, z\in\{-1,1\}^n\,, \rho\in (0,1)\,.
\end{equation}

We considered it as acting  from  $\ell^{\infty}_{2^n}$ to $\ell^2_n$.

\bigskip

Of course, matrix elements $m_{i,z}:=2^{-n}K_i(z,{\bf 1})$ are just $\int_0^1 m_{i, z}(\rho)\, d\rho$.  Again we considered it as acting  from  $\ell^{\infty}_{2^n}$ to $\ell^2_n$, and we know that the sharp dual constant in $L^1$-Poincar\'e inequality is the norm of this matrix as acting from  $\ell^{\infty}_{2^n}$ to $\ell^2_n$.

The norm of the matrix $M=(m_{i,z})=\int_0^1 M_\rho\, d\rho$ as the matrix acting from $\ell^{\infty}_{2^n}$ to $\ell^2_n$ is
\begin{equation}
\label{FA1}
\max_{\la,\, \|\la\|_{\ell^2_n}\le 1} \bE_z\Big| \sum_{i=1}^n \la_i \int_0^1\frac{z_i-\rho }{1-\rho^2}(1+\rho)^{n-d(z)} (1-\rho)^{d(z)}\, d\rho\Big|
\end{equation}

We know that it is less than $\frac{\pi}2 -\eps$, where $\eps>0$ does not depend on $n$.

\bigskip

There is a ``sister" problem, where $L^1$-Poincar\'e inequality is replaced by $L^\infty$ one. It cannot have the form
$\|f-\bE f\|_\infty \le C \|\nabla f\|_\infty$ with constant independent of $n$. This is impossible, e.g. because inequality
$a_1+\dots + a_n \le C(a_1^2+\dots +a_n^2)^{1/2}$ is false. However, if one changes the definition of the gradient, then the corresponding inequality 
becomes meaningful and important (see, e.g., \cite{FP} or \cite{Wagner}.)

Denote $|\tilde\nabla f |(x)= \sum_{i=1}^n |\pd_i f(x)|$. Then one can ask, whether the following inequality holds and with what sharp constant independent of $n$:
\begin{equation}
\label{Linfty}
\|f-\bE f\|_{\infty}\le C_\infty\|\tilde \nabla f\|_{\infty}\,?
\end{equation}

This is a combinatorial question about the diameter of Hamming cube with weighted lengths of edges. Namely, this is equivalent to asking (see \cite{FP}) what is the supremum of $L^\infty$ norms of functions $f$  having zero average on cube and such that
\begin{equation}
\label{tina}
|\tilde\nabla f(x)| \le 1 \quad \forall x\in \{-1,1\}^n\,.
\end{equation}

\begin{remark}
Condition \eqref{tina} can be reformulated  in purely combinatorial terms as follows \textup(see \cite{FP}\textup):  every edge of the graph (=Hamming cube) is provided with its variable ``length" $\ell(x, y)$, and
$$
\sum_{y\sim x} \ell(x, y) \le 2\quad \forall x\in \{-1,1\}^n\,.
$$
One wants then to know what is the universal sharp estimate on the diameter of the cube?
\end{remark}
This becomes the question about independent of $n$ sharp constant $C_\infty$ such that \eqref{Linfty} holds.
The reference \cite{FP} has a very nice reasoning of F. Petrov that proves the following:
$$
C_\infty = 2\,.
$$

Notice that we can easily translate the estimate \eqref{Linfty} into a certain fact about our matrix $M=(m_{i,z})=\int_0^1 M_\rho\, d\rho$ defined in \eqref{miz}. In fact, repeating our reasoning in the previous sections,
we can notice that \eqref{Linfty} is equivalent to finding
$$
\sup_{f: \|f||_{L^1(\{-1, 1\}^n} \le 1}\inf_{h\in Curl} \|h+\int_0^\infty \nabla P_t f dt\|_{L^1(\{-1, 1\}^n; \ell^\infty_n)}\,.
$$

If $Curl $ space would not play any role, that would mean that we are interested in 
$$
\sup_{f: \|f||_{L^1(\{-1, 1\}^n} \le 1}\|\int_0^\infty \nabla P_t f dt\|_{L^1(\{-1, 1\}^n; \ell^\infty_n)}\,.
$$

We can calculate this norm now. It is the norm of our familiar operator
$$
f\to \int_0^\infty \nabla P_t f dt
$$
as acting from $L^1(\{-1,1\}^n, d\mu)$ to $L^1(\{-1,1\}^n, d\mu; \ell^\infty_n)$.

\bigskip

This is  the same the norm of operator $K=(K_1, \dots, K_n)$ (introduced in Section \ref{T}) as acting from $L^1(C^n, d\mu)$ to $L^1(C^n, d\mu; \ell^\infty_n)$. 
Kernel $K_i(x', x)$ can be written down as $K_i(x', x)=x_i \tilde K_i(x', x)$, where
$$
 \tilde K_i(x', x):= \int_0^\infty e^{-t} x_i'x_i\,\Pi_{k\neq i}^n (1+e^{-t} x_k' x_k)\, dt,
 $$
and we see that this is a kernel of the form $k_i(x'\cdot x_i)$, that is, it is a convolution kernel.

We use it with measure $d\mu(x')$ that is invariant, meaning that $d\mu(x'\cdot x)$ is the same measure.
Notice also that  the facts that $K_i(x', x)=x_i \tilde K_i(x', x), x_i=\pm 1,$  imply that the norm of operator with kernel $K$ is the same as the norm of operator with kernel $\tilde K$ -- we mean here the action from $L^1 (C^n)\to L^1(C^n; \ell^\infty_n)$.

The norm of  convolution operator from $L^1$ to $L^1$ is just $L^1$ norm of its kernel.

The reasoning that we have just made implies that the norm of the integral operator $K$ from $L^1 (C^n)\to L^1(C^n; \ell^\infty_n)$ is the same as the norm of 
the matrix $M:= (m_{i,z})_{i=1,\dots, n;\,z\in \{-1,1\}^n}$  considered as the vector function in $L^1(C^n; \ell^\infty_n)$, which is
$$
\int_{C^n} \max_{i=1,\dots, n} |\tilde K_i(z, {\bf 1}) d\mu(z)  = \int_{C^n} \max_{i=1,\dots, n} |K_i(z, {\bf 1}) d\mu(z) = \sum_{z\in C^n} \max_{i=1,\dots, n} |m_{i, z}|\,.
$$

Matrix elements $m_{i,z}$ were computed in Section \ref{T}, see \eqref{miz}.  So it is easy to calculate the latter quantity.

Formula  \eqref{miz} gives us the following: 
\begin{itemize}
\item if $d(z)=\dist(z, {\bf 1}) =0$, then  for all $i$ we have $m_{i, z}(\rho) = 2^{-n} (1+\rho)^{n-1}$,
\item if $d(z)=\dist(z, {\bf 1}) =1$, then  $\max_{i=1,\dots, n} |m_{i, z}| = 2^{-n}\int_0^1(1+\rho)^{n-1}d\rho$ again, 
\item if $d(z)=\dist(z, {\bf 1}) =2$, then  $\max_{i=1,\dots, n} |m_{i, z}| = 2^{-n}\int_0^1(1+\rho)^{n-2}(1-\rho)d\rho$, \dots, 
\item if $d(z)=\dist(z, {\bf 1}) =n$, then  $\max_{i=1,\dots, n} |m_{i, z}| = 2^{-n}\int_0^1(1-\rho)^{n-1}d\rho$.
\end{itemize}

Hence
\begin{align*}
&\sum_{z\in C^n} \max_{i=1,\dots, n} |m_{i, z}| = 2^{-n} \int_0^1 [(1+\rho)^{n-1} + n (1+\rho)^{n-1} +\frac{n(n-1)}{2} (1+\rho)^{n-2}(1-\rho)+
\\
& \dots + (1-\rho)^{n-1}]\, d\rho = 2^{-n}\int_0^1[n (1+\rho)^{n-1} +\frac{n(n-1)}{2} (1+\rho)^{n-2}(1-\rho)+ \dots + (1-\rho)^{n-1}]\frac{1-\rho}{1-\rho}\, d\rho +
\\
&+  2^{-n} \int_0^1 (1+\rho)^{n-1} d\rho =
\\
& 2^{-n}\int_0^1\frac{[(1+\rho)^n +n (1+\rho)^{n-1}(1-\rho) +\frac{n(n-1)}{2} (1+\rho)^{n-2}(1-\rho)^2+ \dots + (1-\rho)^{n} -(1+\rho)^n]}{1-\rho}\, d\rho +
\\
&+  2^{-n} \int_0^1 (1+\rho)^{n-1} d\rho =
\\
& 2^{-n}\int_0^1 \frac{ (1+\rho + 1-\rho)^n - (1+\rho)^n}{1-\rho}\, d\rho + 2^{-n} \int_0^1 (1+\rho)^{n-1} d\rho  = \int_0^1 \frac{1- (\frac{1+\rho}{2})^n}{1-\rho} \, d\rho + \frac{2^{-n}}{n}( 2^n -1)=
\\
&2 \int_{1/2}^1 \frac{1-x^n}{2(1-x)} \,dx +O\big(\frac1n\big)= \int_{1/2}^1 (1+x+x^2+\dots x^{n-1})\,dx + O\big(\frac1n\big) = 1+ \frac12 + \frac13+\dots+\frac1n + O(1)=
\\
& \log n +O(1)\,.
\end{align*}

We see that the operator norm $\sup_{f: \|f||_{L^1(\{-1, 1\}^n} \le 1}\|\int_0^\infty \nabla P_t f dt\|_{L^1(\{-1, 1\}^n; \ell^\infty_n)}$  grows logarithmically with $n$.  So $Curl$ space  plays major part for  the calculation of $C_\infty$ since we know from \cite{FP}, \cite{Wagner} that $C_\infty$ is bounded independent of $n$.
One cannot forget about $Curl$ space in this problem.

\section{Some formulas for $C_{dual}$}
\label{formulas}

As we know
   \begin{equation}
   \label{Cd}
C_{dual} =\lim_{n\to\infty}\sup_{\|v\|_{\ell_n^2} \le 1}  \bE_z\Big| \sum_{i=1}^n v_i\int_0^1\frac{z_i-\rho}{1-\rho^2} (1+\rho)^{n-d({\bf 1}, z)} (1-\rho)^{d({\bf 1}, z)}\, d\rho\Big|
\end{equation}

\bigskip

Let $B(a, b)= \int_0^1t^{a-1} (1-t)^{b-1}$ denote the beta function.  For $k=d({\bf 1}, z)$  integrals 
 $$
 2^{-n}\int_0^1\frac{z_i-\rho}{1-\rho^2} (1+\rho)^{n-d({\bf 1}, z)} (1-\rho)^{d({\bf 1}, z)}\, d\rho =\begin{cases} 
 \int_0^{1/2} t^k (1-t)^{n-k-1} dt ,\quad z_i=1
 \\
- \int_0^{1/2} t^{k-1} (1-t)^{n-k} dt ,\quad z_i=-1
  \end{cases}
 $$
  are certain combinatorial quantities called ``incomplete beta functions"  
 $$
 =
\begin{cases}
 B_{1/2}(k+1, n-k) = F_{1/2}(n-k-1, n) \cdot B(k+1, n-k), \quad \text{if}\,\, z_i=1,
\\
 B_{1/2}( k, n-k+1) = -F_{1/2}(n-k, n) \cdot B(k, n-k+1), \quad \text{if}\,\, z_i=-1\,.
 \end{cases}
 $$
 This can be written down as
 $$
 =
\begin{cases}
 B_{1/2}(k+1, n-k) = F_{1/2}(n-k-1, n) \frac1{(n-k)\binom{n}{k}}, \quad \text{if}\,\, z_i=1,
\\
 B_{1/2}( k, n-k+1) = -F_{1/2}(n-k, n)  \frac1{k\binom{n}{ k}}, \quad \text{if}\,\, z_i=-1\,.
 \end{cases}
 $$

  \medskip
 
 Here $F_{1/2}(n-k-1, n)$ is the probability of at least $k+1$ successes in Bernoulli scheme  with $n$ trials:
 $$
 F_{1/2}(n-k-1, n) =2^{-n} \sum_{r= k+1}^n\binom{n}{r}\,.
 $$
 
 Thus, we can rewrite the main part of  formula \eqref{Cd} for $C_{dual}$ as follows
 \begin{align*}
 & \bE_z\Big| \sum_{i=1}^n v_i\int_0^1\frac{z_i-\rho}{1-\rho^2} (1+\rho)^{n-d({\bf 1}, z)} (1-\rho)^{d({\bf 1}, z)}\, d\rho\Big| =
 \\
& = \frac1{2^n} \sum_{k=0}^n \binom{n}{k} \bE\Big[\Big| \sum_{i=1}^n v_i \frac1{\binom{n}{k}}\frac{2z_i}{1+(1-2k/n)z_i}  \sum_{r=k+1-1_{z_i=-1}}^n \binom{n}{r}  \Big|\Big| d({\bf 1}, z)=k \Big]
\\
&  = \frac1{2^n} \sum_{k=0}^n  \bE\Big[\Big| \sum_{i=1}^n v_i \frac{2z_i}{1+(1-2k/n)z_i}  \sum_{r=k+1-1_{z_i=-1}}^n \binom{n}{r}  \Big|\Big| d({\bf 1}, z)=k \Big]
\\
& = \frac1{n2^{n-1}} \sum_{k=0}^{n-1}  \sum_{r=k+1}^n \binom{n}{r}  \bE\Big[\Big| \sum_{i=1}^n v_i  \frac{z_i}{1+(1-2k/n)z_i}  \Big|\Big| d({\bf 1}, z)=k \Big]
\\
& + \frac1{2^n} \sum_{k=1}^n \frac1k\binom{n}{k}\bE\Big[\Big|  \sum_{i:\, z_i=-1} v_i  \Big|\Big| d({\bf 1}, z)=k \Big] 
 \end{align*}
 But $\|v\|_2\le 1$, so $\bE\Big[\Big|  \sum_{i:\, z_i=-1} v_i  \Big|\Big| d({\bf 1}, z)=k \Big]  \le \sqrt{k}$. Therefore, the last sum is
 $$
 \le  \frac1{2^n} \sum_{k=1}^n \frac1{\sqrt{k}}\binom{n}{k} \le \sqrt{ \frac1{2^n} \sum_{k=1}^n \frac1{k}\binom{n}{k}} \le \frac{2}{\sqrt{n+1}} \to 0.
 $$
Hence, we can rewrite
\begin{align*}
 & \bE_z\Big| \sum_{i=1}^{n-1} v_i\int_0^1\frac{z_i-\rho}{1-\rho^2} (1+\rho)^{n-d({\bf 1}, z)} (1-\rho)^{d({\bf 1}, z)}\, d\rho\Big| =
 \\
 & \frac1{n2^{n-1}} \sum_{k=0}^n  \sum_{r=k+1}^n \binom{n}{r}  \bE\Big[\Big| \sum_{i=1}^n v_i  \frac{z_i}{1+(1-2k/n)z_i}  \Big|\Big| d({\bf 1}, z)=k \Big] + o(1)\,.
 \end{align*}
 
 Here is the first formula for $C_{dual}$:
 \begin{equation}
   \label{Cdfirst}
C_{dual} =\lim_{n\to\infty}\sup_{\|v\|_{\ell_n^2} \le 1}  \frac1{n2^{n-1}} \sum_{k=0}^{n-1}  \sum_{r=k+1}^n \binom{n}{r}  \bE\Big[\Big| \sum_{i=1}^n v_i  \frac{z_i}{1+(1-2k/n)z_i}  \Big|\Big| d({\bf 1}, z)=k \Big] \,.
\end{equation}

\bigskip

We can simplify further this formula. For that, consider
$$
\Phi_n(k) :=  \frac1{2^n}\sum_{r=k+1}^n \binom{n}{r}  =\bP[\text{Binom}(n, \frac12)>k]\,.
$$
This implies
$$
\Phi_n(\frac{\sqrt{n}}{2} +\frac{x\sqrt{n}}{2} )\to \bP[N(0,1) >x]\,.
$$

\begin{theorem}
\label{Cdsecond-th}
 \begin{equation}
   \label{Cdsecond}
C_{dual} =\lim_{n\to\infty}\sup_{\|v\|_{\ell_n^2} \le 1}  \frac{2}{n}\sum_{k=0}^{n/2}    \bE\Big[\Big| \sum_{i=1}^n v_i  \frac{z_i}{1+(1-2k/n)z_i}  \Big|\Big| d({\bf 1}, z)=k \Big] \,.
\end{equation}
\end{theorem}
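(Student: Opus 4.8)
The plan is to reorganize the double sum in \eqref{Cdfirst}. Since $\Phi_n(k)=2^{-n}\sum_{r=k+1}^n\binom nr$ by definition, we have $2^{-(n-1)}\sum_{r=k+1}^n\binom nr=2\Phi_n(k)$, so \eqref{Cdfirst} reads
$$
C_{dual}=\lim_{n\to\infty}\ \sup_{\|v\|_{\ell^2_n}\le1}\ \frac2n\sum_{k=0}^{n-1}\Phi_n(k)\,E_k(v),\qquad
E_k(v):=\bE\Big[\Big|\sum_{i=1}^n v_i\frac{z_i}{1+(1-\frac{2k}{n})z_i}\Big|\ \Big|\ d({\bf 1},z)=k\Big].
$$
The first thing I would establish is a bound on $E_k(v)$ uniform over $\|v\|_{\ell^2_n}\le1$. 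Conditioned on $d({\bf 1},z)=k$ the set $S=\{i:z_i=-1\}$ is uniform among $k$-element subsets, and on that event $\frac{z_i}{1+(1-2k/n)z_i}$ equals $\frac{n}{2(n-k)}$ for $i\notin S$ and $-\frac{n}{2k}$ for $i\in S$, so the inner quantity equals $-\frac{n^2}{2k(n-k)}\big(\sum_{i\in S}v_i-\frac kn\sum_i v_i\big)$. Since the variance of the subset sum $\sum_{i\in S}v_i$ over a uniform $k$-subset is $\frac{k(n-k)}{n(n-1)}\big(\|v\|_{\ell^2_n}^2-\frac1n(\sum_i v_i)^2\big)\le\frac{k(n-k)}{n(n-1)}$, the Cauchy--Schwarz inequality yields $E_k(v)\le\frac{n}{\sqrt{k(n-k)}}$ for $1\le k\le n-1$ and $E_0(v)=\frac12|\sum_i v_i|\le\frac{\sqrt n}{2}$. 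In particular $E_k(v)\le 2\sqrt n$ for all $k$, and $E_k(v)=O(1)$ once $n/4\le k\le 3n/4$; this estimate is the workhorse of the argument.

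The second ingredient is a reflection symmetry: the involution $z\mapsto -z$ maps $\{d({\bf 1},z)=k\}$ bijectively onto $\{d({\bf 1},z)=n-k\}$ and sends $1-\frac{2k}{n}$ to $-(1-\frac{2k}{n})$, hence reverses the sign of every $\frac{z_i}{1+(1-2k/n)z_i}$; therefore $E_k(v)=E_{n-k}(v)$ for all $v$. Folding the sum about $n/2$ and using $\Phi_n(k)+\Phi_n(n-k)=1-\bP[\text{Binom}(n,\frac12)=k]$ (an elementary consequence of the symmetry of the binomial distribution), one gets, up to a single self-paired term of size $O(1/n)$,
$$
\frac2n\sum_{k=0}^{n-1}\Phi_n(k)E_k(v)=\frac2n\sum_{k=0}^{\lfloor n/2\rfloor}E_k(v)-R_n(v)+O(1/n),\qquad R_n(v):=\frac2n\sum_{k=0}^{\lfloor n/2\rfloor}\bP[\text{Binom}(n,\tfrac12)=k]\,E_k(v)\ge0.
$$

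Finally I would show $\sup_{\|v\|_{\ell^2_n}\le1}R_n(v)\to0$ by splitting the range of $k$ at $n/4$. For $n/4\le k\le n/2$ one has $\binom nk2^{-n}=O(n^{-1/2})$ and $E_k(v)=O(1)$, so those $O(n)$ terms contribute $\frac2n\cdot O(n)\cdot O(n^{-1/2})\cdot O(1)=O(n^{-1/2})$; for $k<n/4$ one has $\binom nk2^{-n}\le 2^{-cn}$ for some fixed $c>0$ while $E_k(v)\le 2\sqrt n$, so that part is at most $\frac2n\cdot O(n)\cdot 2^{-cn}\cdot 2\sqrt n\to0$. Hence $R_n(v)\to0$ uniformly over the unit ball, and the difference between $\sum_{k=0}^{\lfloor n/2\rfloor}$ and $\sum_{k=0}^{n/2}$ costs at most one extra term $E_{n/2}(v)=O(1)$, i.e.\ $O(1/n)$. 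Thus $\frac2n\sum_{k=0}^{n-1}\Phi_n(k)E_k(v)$ and $\frac2n\sum_{k=0}^{n/2}E_k(v)$ differ by a quantity that tends to $0$ uniformly in $v$; taking the supremum over $v$ and letting $n\to\infty$, and comparing with \eqref{Cdfirst}, establishes \eqref{Cdsecond}. The point needing care is to keep the bound on $E_k(v)$ and the binomial-tail estimates uniform over the unit ball near the combinatorial boundary $k\approx0$, where $E_k(v)$ only obeys the weaker bound $O(\sqrt n)$; everything else is bookkeeping on binomial tails.
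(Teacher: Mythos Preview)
Your proof is correct and takes a genuinely different route from the paper's.

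Both proofs share the same preliminary step: the Cauchy--Schwarz bound $E_k(v)\lesssim n/\sqrt{k(n-k)}$ (you compute it via the variance of a random subset sum, the paper via the Gram matrix of the $X_i^k$; these are equivalent). The difference lies in how the weight $\Phi_n(k)$ is eliminated. The paper proceeds by a three–window decomposition with threshold $b_n=\sqrt n\log n$: on $[n/2-b_n,n/2+b_n]$ it uses $E_k=O(1)$ and $\Phi_n\le 1$ to get $O(\log n/\sqrt n)$; on $[n/2+b_n,n-1]$ it invokes a Gaussian–type tail bound $\Phi_n(n/2+b_n)=O(e^{-\log^2 n})$; on the remaining range it argues $\Phi_n(k)=1-O(e^{-\log^2 n})$ and then adds the two discarded windows back in with coefficient $1$. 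Your argument instead exploits the reflection $z\mapsto -z$, which yields the exact symmetry $E_k(v)=E_{n-k}(v)$ and, together with the binomial identity $\Phi_n(k)+\Phi_n(n-k)=1-\bP[B=k]$, folds the full sum onto $[0,\lfloor n/2\rfloor]$ with a residual $R_n(v)=\frac2n\sum_{k\le n/2}\bP[B=k]E_k(v)$. This residual is then dispatched by the crudest possible two–range split at $n/4$.

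What each approach buys: your folding replaces the approximation $\Phi_n(k)\approx 1$ by an exact identity, so you never need the precise Gaussian tail $e^{-\log^2 n}$; a single Hoeffding bound $\bP[B<n/4]\le 2^{-cn}$ and $\max_k\binom nk 2^{-n}=O(n^{-1/2})$ suffice. The paper's approach does not use (or notice) the symmetry $E_k=E_{n-k}$, and this costs it a finer partition and a more delicate large–deviation input. Your argument is shorter and structurally clearer; the paper's has the minor advantage of not needing the symmetry observation, but at the price of more bookkeeping.
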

\begin{proof}
Define
$$
X_i^k := \frac{z_i}{1+(1-2k/n)z_i} \,.
$$
By Cauchy--Schwarz we get
\begin{equation}
\label{CS}
\bE [\sum_{i=1}^n v_i  X_i^k | d({\bf 1}, z)=k ] \le \sum_{i, j=1}^nv_iv_j \bE[ X_j^k X_i^k |d({\bf 1}, z)=k ]\,.
\end{equation}
It is easy to see that
\begin{align*}
&\bE [X_i^k | d({\bf 1}, z)=k ]=0,
\\
& \bE [(X_i^k)^2 | d({\bf 1}, z)=k ]=\frac{n^2}{4k(n-k)},
\\
&\bE [X_i^k X_j^k | d({\bf 1}, z)=k ]=-\frac1{n-1}\frac{n^2}{4k(n-k)},\quad i\neq j.
\end{align*}
Hence, we know the Gramm matrix $G:=\{\bE[ X_j^k X_i^k |d({\bf 1}, z)=k ]\}$. It is self adjoint rank one perturbation of the diagonal matrix $\text{diag} (\frac{n}{n-1}\frac{n^2}{4k(n-k)})$. So it is easy to calculate the norm of $G$. It is $\sqrt{\frac{n}{n-1}\frac{n^2}{4k(n-k)}}$.

Hence, by \eqref{CS}
\begin{equation}
\label{CS1}
\bE [\sum_{i=1}^n v_i  X_i^k | d({\bf 1}, z)=k ] \le \sqrt{\frac{n}{n-1}\frac{n^2}{4k(n-k)}}  \,.
\end{equation}

Let $b_n= \sqrt{n} \log n$. By the same spectral estimate for a cut-off matrix and by the fact that $\Phi_n(k) \le 1$, we will get
\begin{equation}
\label{CS2}
\sum_{k=n/2 -b_n}^{n/2+b_n} \Phi_n(k)\bE [\sum_{i=1}^n v_i  X_i^k | d({\bf 1}, z)=k ] \le \sum_{k=n/2 -b_n}^{n/2+b_n}  \sqrt{\frac{n}{n-1}\frac{n^2}{4k(n-k)}} =O(b_n) \,.
\end{equation}
Thus,
\begin{equation}
\label{CS3}
 \frac1{n2^{n-1}} \sum_{k=n/2 -b_n}^{n/2+b_n}   \sum_{r=k+1}^n \binom{n}{r}  \bE\Big[\Big| \sum_{i=1}^n v_i  \frac{z_i}{1+(1-2k/n)z_i}  \Big|\Big| d({\bf 1}, z)=k \Big]  = O\big(\frac{\log n}{\sqrt{n}}\big) \,.
\end{equation}
Using the spectral estimate for another cut-off matrix and the fact that $\sqrt{\frac{n}{n-1}\frac{n^2}{4k(n-k)}} = O(\sqrt{n})$, we get
\begin{equation}
\label{CS4}
 \bE [\sum_{i=1}^n v_i  X_i^k | d({\bf 1}, z)=k ] \le \sqrt{\frac{n}{n-1}\frac{n^2}{4k(n-k)}} =O(\sqrt{n}) \,.
\end{equation}
Hence,
\begin{align}
\label{CS5}
& \sum_{k=n/2 + b_n}^{n-1}  \Phi_n(k) \bE\Big[\Big| \sum_{i=1}^n v_i  \frac{z_i}{1+(1-2k/n)z_i}  \Big|\Big| d({\bf 1}, z)=k \Big]   = O\Big(\sqrt{n}\sum_{k=n/2 + b_n}^{n-1} \Phi_n(k)\Big)  \notag
\\
&= O(n^{3/2} \Phi_n(n/2+b_n) ) = O(n^{3/2} e^{-\log^2 n})\,.
\end{align}
\begin{equation}
\label{CS6}
 \frac1{n2^{n-1}} \sum_{k=n/2 + b_n}^{n-1}   \sum_{r=k+1}^n \binom{n}{r}  \bE\Big[\Big| \sum_{i=1}^n v_i  \frac{z_i}{1+(1-2k/n)z_i}  \Big|\Big| d({\bf 1}, z)=k \Big]  = O\big(n^{1/2} e^{-\log^2 n}\big) \,.
\end{equation}
So when we come to the limit with $n\to \infty$ in \eqref{Cdfirst}, we are left with
\begin{align*}
&C_{dual}=\lim_{n\to\infty}\sup_{\|v\|_{\ell_n^2} \le 1}  \frac1{n2^{n-1}} \sum_{k=n/2 -b_n}^{n-1}  \sum_{r=k+1}^n \binom{n}{r}  \bE\Big[\Big| \sum_{i=1}^n v_i  \frac{z_i}{1+(1-2k/n)z_i}  \Big|\Big| d({\bf 1}, z)=k \Big] =
\\
&= \lim_{n\to\infty}\sup_{\|v\|_{\ell_n^2} \le 1}  \frac{2}{n} \sum_{k=n/2 -b_n}^{n-1} \Phi_n(k)\bE\Big[\Big| \sum_{i=1}^n v_i  \frac{z_i}{1+(1-2k/n)z_i}  \Big|\Big| d({\bf 1}, z)=k \Big] =
\\
& =  \lim_{n\to\infty}\sup_{\|v\|_{\ell_n^2} \le 1}  \frac{2}{n} \sum_{k=n/2 -b_n}^{n-1} \bE\Big[\Big| \sum_{i=1}^n v_i  \frac{z_i}{1+(1-2k/n)z_i}  \Big|\Big| d({\bf 1}, z)=k \Big],
\end{align*}
the last equality being true because in this range of $k$ we have
$$
1-O(e^{-\log^2 n}) \le \Phi_n(k) \le 1\,.
$$

Now we can add back 
$$
 \frac{2}{n} \sum_{k=n/2 -b_n}^{n/2+b_n} \bE\Big[\Big| \sum_{i=1}^n v_i  \frac{z_i}{1+(1-2k/n)z_i}  \Big|\Big| d({\bf 1}, z)=k \Big]
 $$
 and
 $$ \frac{2}{n} \sum_{k=n/2 +b_n}^{n-1} \bE\Big[\Big| \sum_{i=1}^n v_i  \frac{z_i}{1+(1-2k/n)z_i}  \Big|\Big| d({\bf 1}, z)=k \Big]
 $$ 
 as we already saw that they are $ O\big(\frac{\log n}{\sqrt{n}}\big) $ and $O\big( e^{-\log^2 n}\big)$ correspondingly. Theorem is completely proved.
\end{proof}

\begin{remark}
From \eqref{CS1} and \eqref{Cdfirst} one immediately deduces the estimate
\begin{equation}
\label{LP}
C_{dual} \le \lim_{n\to\infty} \sum_{k=1}^{n-1} \Phi_n(k) \frac1{\sqrt{k(n-k)} }=\frac{\pi}{2}\,.
\end{equation}
Of course Theorem \ref{Cdsecond-th} also immediately gives this estimate.
This is one of the numerous re-proofs of Lust-Piquard--Ben Efraim estimate from \cite{BELP}. As we can see the reason for their estimate lies in Cauchy--Schwarz estimate and spectral calculation for the Gramm matrix.
\end{remark} 

\bigskip

\begin{remark}
What we did in this paper, was that we estimated by funny Khintchine inequality another \textup(and formally bigger than $C_{dual}$\textup) quantity
\begin{equation}
\label{Cd1}
\lim_{n\to\infty}\int_0^1\sup_{ \|v\|_{\ell_n^2} \le 1}  \bE_z\Big| \sum_{i=1}^n v_i\frac{z_i-\rho}{1-\rho^2} \Big|(1+\rho)^{n-d({\bf 1}, z)} (1-\rho)^{d({\bf 1}, z)}\, d\rho <\frac{\pi}{2}\,.
\end{equation}
\end{remark}



\end{document}